%% file: paper.tex
\documentclass[onefignum,onetabnum]{siamart190516}
\renewcommand{\theequation}{\thesection.\arabic{equation}}
\renewcommand{\thefigure}{\thesection.\arabic{figure}}
\renewcommand{\thetable}{\thesection.\arabic{table}}

\usepackage{amsmath} 
\usepackage{amsfonts}
\usepackage{amssymb}
\usepackage{booktabs}
\usepackage{stmaryrd}
\usepackage{subcaption}
\usepackage{cite}
\usepackage{cleveref}

\input{ex_shared}

\newcommand{\Ltwo}{L^2}

\newcommand{\hf}{\frac{1}{2}}

\newcommand{\TVM}{\mathrm{TVM}}
\newcommand{\cF}{\mathcal{F}}
\newcommand{\cG}{\mathcal{G}}

\numberwithin{equation}{section}

\usepackage[belowskip=-3pt]{caption}
\begin{document}

\title{TVD and TVB Preservation without TVD Time Discretization for Discontinuous Galerkin Methods}

\maketitle

\begin{abstract}
Total variation diminishing (TVD) and total variation bounded (TVB) properties are crucial for controlling spurious oscillations in numerical solutions of conservation laws. In the classical Runge--Kutta (RK) discontinuous Galerkin (DG) framework, enforcing these properties is intrinsically tied to TVD time integrators, more commonly known today as strong-stability-preserving (SSP) methods. This reliance imposes severe structural restrictions, including order barriers and incompatibility with various fully discrete DG formulations, ranging from the recent RKDG method with compact stencils (cRKDG) to the widely established Arbitrary DERivative (ADER) DG method. To bypass these constraints, we propose a novel trace-limited corrector framework that preserves the TVD/TVB-in-the-means properties using generic, non-SSP time stepping. Based on Harten's lemma, our key insight is that total variation stability is dictated solely by the cell-average update in the final corrector stage. Consequently, we modify the traces in the numerical fluxes exclusively in the final stage, leaving the intermediate predictor stages unconstrained. This strategy decouples oscillation control from the SSP restriction, accommodates standard RKDG, cRKDG, and ADER-DG predictors, and retains the compactness of the cRKDG framework. We also prove that the limiter does not activate in smooth regions, thereby preserving the underlying accuracy. Finally, numerical experiments are presented to demonstrate the capabilities and robustness of the method.
\end{abstract} 

\begin{keywords}
discontinuous Galerkin methods, hyperbolic conservation laws, total variation diminishing, total variation bounded, Harten's lemma, non-SSP time discretizations
\end{keywords}

\begin{AMS}
65M60, 65M12
\end{AMS}

\vspace{-0.2cm}

\section{Introduction}
\label{sec:introduction}

Discontinuous Galerkin (DG) methods, typically coupled with Runge--Kutta (RK) time discretizations, are widely used for solving hyperbolic conservation laws \cite{rkdg1,rkdg2,rkdg3,rkdg4,rkdg5}. To suppress spurious oscillations near shocks generated by the high-order approximations, total variation diminishing (TVD) or total variation bounded (TVB) limiting is usually applied as a standard remedy \cite{rkdg2}. In the classical Runge--Kutta discontinuous Galerkin (RKDG) framework,  this limiting strategy is intrinsically tied to what were historically termed TVD RK time discretizations \cite{shu1988total,gottlieb1998total}, more commonly known today as strong-stability-preserving (SSP) RK time discretizations \cite{gottlieb2001strong,gottlieb2011strong}. These methods can be represented as convex combinations of forward-Euler steps. 
By ensuring that the forward-Euler step is TVD/TVB in the means, the SSP-RK method inherits this stability property via its underlying convex decomposition.  Consequently, the TVD or SSP time integrators were historically believed to be essential for preserving fully discrete TVD properties; indeed, numerical examples in \cite{gottlieb1998total} demonstrated that ``non-TVD but linearly stable RK time discretization can generate oscillations even for TVD spatial discretization."

To overcome this rigid reliance on SSP structures, we develop a novel TVD/TVB framework tailored for general non-SSP time stepping. Our motivation is twofold. First, explicit SSP-RK methods impose strict order barriers: they require additional stages to achieve fourth-order accuracy and cannot exceed order four with positive weights \cite{gottlieb1998total, ruuth2002two}. Second, many advanced fully discrete DG formulations naturally lack an SSP structure; for instance, the RKDG method with compact stencils (cRKDG) fundamentally requires non-SSP-RK schemes to ensure conservation \cite{chen2024runge, liu2025bound, babbar2025compact, babbar2026compact}, while the Arbitrary DERivative (ADER) DG \cite{dumbser2005ader,dumbser2008unified,dumbser2018efficient,gaburro2023high} method
does not use RK or multi-step integrators at all, let alone their SSP versions.

Our approach relies on the same theoretical foundation as the classical SSP-RKDG 
framework, namely Harten's lemma 
\cite{shu2009discontinuous,harten1983high}, but applies it from a different perspective. Drawing on this lemma, our key observation is that the TVD/TVB-in-the-means property is dictated solely by the cell-average update in the final corrector stage of the RK integrator. Consequently, 
this property can be attained by modifying the numerical fluxes in the final 
stage alone. The interior predictor stages are not constrained by this property; 
they may use generic {approximations} without an SSP structure. We leverage this insight 
by introducing a trace-limited corrector framework for DG schemes paired with 
generic {non-SSP} time discretizations. The predictor stages are left unchanged, while their traces entering the final numerical fluxes in the corrector stage are limited with respect to the initial-stage cell averages, thereby ensuring that the final-stage DG solution is TVD/TVB in the means.
A standard polynomial limiter is then applied as a postprocessing step to produce the accepted DG polynomial beyond its means.
This construction decouples the TVD/TVB limiting mechanism from the SSP structure of the time integrator, while preserving conservation and the
usual TVD/TVB-in-the-means interpretation of DG limiting.

As a main application of the proposed framework, we construct the TVD/TVB cRKDG method. The cRKDG method \cite{chen2024runge} utilizes cell-local derivative operators during the inner stages of an RK method, yielding compact stencils, reduced communication overhead in parallel computing, and simple boundary treatments. However, due to its fundamental reliance on a non-SSP-RK structure, no TVD/TVB cRKDG schemes have been designed in the literature prior to this work. Beyond cRKDG, the proposed framework is also compatible with other fully discrete DG formulations; for instance, Section \ref{sec:ADER} outlines how it can be combined with ADER-DG methods to achieve provable TVD/TVB properties. Moreover, the new framework allows implicit predictors, thereby opening new pathways for designing implicit TVD/TVB DG schemes. {To keep our work more focused, we will concentrate on explicit RKDG and cRKDG solvers in subsequent sections; detailed studies on extensions to other fully discrete DG methods will be postponed to future work.}

This work aligns with active ongoing efforts in the literature to address two central challenges in numerical conservation laws: oscillation control \cite{zhu2020simple,lu2021oscillation,peng2025oedg,wei2025jump} and structure preservation \cite{guermond2019invariant,ranocha2020relaxation,kuzmin2024property,rueda2024monolithic,wu2026high}. In particular, we note that the proposed method shares similar goals and outcomes with recent invariant-domain-preserving RK methods (see, e.g., \cite{ern2022invariant}), as both preserve physical invariant properties beyond the SSP structural restriction. However, our approach adopts a fundamentally different methodology. 
By building upon the classical Harten's lemma, this approach eliminates the need for immediate limiting operations following each intermediate predictor stage, thereby retaining the compactness of the cRKDG method, streamlining the algorithmic framework for concurrent computation, and facilitating better adaptation to other fully discrete DG formulations beyond the RK family.

The remainder of the paper is organized as follows. Section~\ref{sec:ssp-rkdg} reviews the classical SSP-RKDG framework and the TVD/TVB polynomial limiter. Section~\ref{sec:generic-rkdg} introduces the proposed trace-limited corrector framework for generic RKDG and cRKDG methods, {along with a brief discussion on its application to the ADER-DG method}. The main properties of the proposed schemes are analyzed in Section~\ref{sec:properties}.
Extensions to systems and multidimensional problems are described in
Section~\ref{sec:extensions}. Numerical tests are presented in
Section~\ref{sec:numerical_tests}. Conclusions are given in Section~\ref{sec:conclusions}. 

\section{Background}
\label{sec:ssp-rkdg}
This section reviews the classical SSP-RKDG framework, in which
TVD or TVB stability is inherited from the limited forward-Euler DG step.

\subsection{Forward-Euler DG methods}
\label{subsec:dg-fe}
Consider a scalar conservation law in 1D with periodic or compactly supported boundary conditions:
\begin{equation}
  u_t + f(u)_x = 0, \qquad x\in \Omega, \quad t>0.
  \label{eq:scl}
\end{equation}

Let $\mathcal{T}_h = \{I_i\}_{i=1}^N$, with $I_i=[x_{i-1/2},x_{i+1/2}]$,
be a partition of \(\Omega\). We denote by
\(h_i=x_{i+1/2}-x_{i-1/2}\) the cell size of $I_i$ and $h = \max_i h_i$. The DG finite element space of degree \(k\) is $\mathbb{V}_h^k
  =
  \bigl\{v\in L^2(\Omega):
  v|_{I_i}\in \mathbb P^k(I_i),\; i=1,\ldots,N\bigr\},$
where $\mathbb{P}^k(I_i)$ denotes the space of polynomials of degree at most $k$ on $I_i$. For \(v\in \mathbb{V}_h^k\), we denote its traces at cell interfaces by
$v_{i+1/2}^\pm = \lim_{\delta\to 0^{+}} v(x_{i+1/2}\pm\delta)$, 
and write $\overline v_{i}
  =
  {1}/{h_i}\cdot\int_{I_i}v(x)\,\mathrm{d}x$ 
for its cell average on \(I_i\). 
We also use the shorthands
$\Delta_+\overline v_{i} = \overline v_{i+1}-\overline v_{i}$ and $
  \Delta_-\overline v_{i} = \overline v_{i}-\overline v_{i-1}$
for the forward and backward differences of cell averages, respectively.

The standard DG semi-discretization is to find \(u_h(\cdot,t)\in \mathbb{V}_h^k\)
such that
\begin{equation}
  (u_h)_t = \mathcal F(u_h),
  \label{eq:semi-dg}
\end{equation}
where the DG spatial operator \(\mathcal F:\mathbb{V}_h^k\to \mathbb{V}_h^k\) is defined by
\begin{equation}
\begin{aligned}
  \int_{I_i}\mathcal F(u_h)v\,\mathrm{d}x
  =
  \int_{I_i} f(u_h)v_x\,\mathrm{d}x -
  \left[
  \widehat f(u_{h,i+\hf}^-,u_{h,i+\hf}^+)v_{i+\hf}^-
  -
  \widehat f(u_{h,i-\hf}^-,u_{h,i-\hf}^+)v_{i-\hf}^+
  \right]
\end{aligned}
\label{eq:F-def}
\end{equation}
for all \(v\in \mathbb{V}_h^k\). Here, \(\widehat f \) is a consistent monotone
numerical flux, which means \(\widehat f(u,u)=f(u)\), and \(\widehat f\) is nondecreasing in its first argument and
nonincreasing in its second argument. 
Moreover, we assume that
\(\widehat f\) is Lipschitz continuous in both arguments.

The raw forward-Euler DG step associated with \eqref{eq:semi-dg} is
\begin{equation}
  u_h^{n+1}
  =
  u_h^n+\Delta t\,\mathcal F(u_h^n).
  \label{eq:fe-dg}
\end{equation}
Taking
\(v=1\) in \eqref{eq:F-def} gives the cell-average update
\begin{equation}
  \overline u_{h,i}^{n+1}
  =
  \overline u_{h,i}^n
  -\lambda_i
  \left[
  \widehat f(u_{h,i+1/2}^{n,-},u_{h,i+1/2}^{n,+})
  -
  \widehat f(u_{h,i-1/2}^{n,-},u_{h,i-1/2}^{n,+})
  \right],
  \quad
  \lambda_i=\frac{\Delta t}{h_i}.
  \label{eq:fe-cell-average}
\end{equation}
Thus the cell averages evolve by a conservative finite-volume-type update, but with numerical fluxes evaluated at DG traces.

A relevant stability notion is defined through the total variation of cell averages. 
\begin{definition}
Let $\operatorname{TVM}(v) = \sum_i |\overline{v}_{i+1} - \overline{v}_{i}|$ denote the total-variation-in-the-means seminorm on $\mathbb{V}_h^k$. A DG method is:
\begin{enumerate}
    \item {TVD in the means} (TVDM) if $\operatorname{TVM}(u_h^{n+1}) \le \operatorname{TVM}(u_h^n)$;
    \item {TVB in the means} (TVBM) if $\operatorname{TVM}(u_h^{n}) \leq C \ \forall n \ge 0$, where $C$ is a constant.
\end{enumerate}
\end{definition}
  
The following lemma provides a sufficient condition for TVDM.

\begin{lemma}[Harten's lemma \cite{harten1983high}]
\label{lem:harten}
If a cell-average scheme admits the form
\begin{equation}
  \overline u_{h,i}^{n+1}
  =
  \overline u_{h,i}^n
  + C_{i+1/2}\Delta_+\overline u_{h,i}^n
  - D_{i-1/2}\Delta_-\overline u_{h,i}^n,
  \label{eq:harten-form}
\end{equation}
with periodic or compactly supported boundary conditions, where $C_{i+1/2}$ and $D_{i-1/2}$ may be nonlinear functions of $u_h^n$ satisfying 
\begin{equation}
  C_{i+1/2}\ge 0,
  \qquad
  D_{i+1/2}\ge 0,
  \qquad
  C_{i+1/2}+D_{i+1/2}\le 1
  \label{eq:harten-condition}
\end{equation}
for all \(i\), then the scheme is TVDM:
\begin{equation}
  \operatorname{TVM}(u_h^{n+1})
  \le
  \operatorname{TVM}(u_h^n).
  \label{eq:limited-fe-tvdm}
\end{equation}
\end{lemma}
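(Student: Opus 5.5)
The plan is to take forward differences of the update formula \eqref{eq:harten-form} and show that the sum of their absolute values does not increase. For brevity write $w_i=\overline u_{h,i}^n$ and $\Delta_{i+1/2}=w_{i+1}-w_i$. Then $\Delta_+\overline u_{h,i}^n=\Delta_{i+1/2}$ and $\Delta_-\overline u_{h,i}^n=\Delta_{i-1/2}$.

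\textbf{Step 1: difference the update.} Subtract \eqref{eq:harten-form} at cell $i$ from the same formula at cell $i+1$. Regrouping gives
\begin{equation*}
\overline u_{h,i+1}^{n+1}-\overline u_{h,i}^{n+1}
=\bigl(1-C_{i+1/2}-D_{i+1/2}\bigr)\Delta_{i+1/2}
+C_{i+3/2}\Delta_{i+3/2}
+D_{i-1/2}\Delta_{i-1/2}.
\end{equation*}
This is just bookkeeping: the $+C_{i+3/2}\Delta_{i+3/2}$ term comes from cell $i+1$, the $-D_{i+1/2}\Delta_{i+1/2}$ term also comes from cell $i+1$, and cell $i$ contributes $-C_{i+1/2}\Delta_{i+1/2}+D_{i-1/2}\Delta_{i-1/2}$.

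\textbf{Step 2: bound termwise.} By \eqref{eq:harten-condition}, all three coefficients are nonnegative. The triangle inequality then gives
\begin{equation*}
\bigl|\overline u_{h,i+1}^{n+1}-\overline u_{h,i}^{n+1}\bigr|
\le (1-C_{i+1/2}-D_{i+1/2})|\Delta_{i+1/2}|
+C_{i+3/2}|\Delta_{i+3/2}|+D_{i-1/2}|\Delta_{i-1/2}|.
\end{equation*}

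\textbf{Step 3: sum over $i$.} Sum this inequality over all $i$. With periodic boundary conditions, shifting the index gives
\begin{equation*}
\sum_i C_{i+3/2}|\Delta_{i+3/2}|=\sum_i C_{i+1/2}|\Delta_{i+1/2}|,
\qquad
\sum_i D_{i-1/2}|\Delta_{i-1/2}|=\sum_i D_{i+1/2}|\Delta_{i+1/2}|.
\end{equation*}
These two sums cancel exactly against the $-C_{i+1/2}-D_{i+1/2}$ parts of the first coefficient. What remains is $\sum_i|\Delta_{i+1/2}|=\operatorname{TVM}(u_h^n)$, which is \eqref{eq:limited-fe-tvdm}.

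For compactly supported data, the sums are finite because only finitely many differences are nonzero. The same index shift is then valid, since the coefficients multiply differences that vanish outside a bounded set.

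The only delicate point is the regrouping in Step 1. One must make sure each coefficient is paired with the correct interface difference: $C_{i+1/2}$ and $D_{i+1/2}$ with $\Delta_{i+1/2}$. Otherwise the condition $C_{i+1/2}+D_{i+1/2}\le1$ would not be the right one for the middle coefficient. Once that pairing is correct, the remaining steps are routine.
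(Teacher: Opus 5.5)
Your argument is correct and complete. You difference the update, use the nonnegativity of $1-C_{i+1/2}-D_{i+1/2}$, $C_{i+3/2}$ and $D_{i-1/2}$ with the triangle inequality, and then shift indices; the paper gives no proof of its own and only cites Harten's original work, whose standard argument is the one you reproduce.
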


\subsection{TVD/TVB limiting}
\label{subsec:poly-limiter}

The raw forward-Euler update \eqref{eq:fe-dg} is not automatically TVDM/TVBM for
general DG polynomials, as the interface traces may differ from the
cell averages by amounts that violate the Harten form. 
The classical remedy is to apply a limiter to process the candidate DG solution \cite{rkdg2,shu2009discontinuous, zhong2013simple, dumbser2014posteriori}.

The standard minmod function is
\begin{equation}
  m(a,b,c)
  =
  \begin{cases}
  s\min\{|a|,|b|,|c|\},
  &\text{if } \operatorname{sgn}(a)=\operatorname{sgn}(b)=\operatorname{sgn}(c)=s,\\[2mm]
  0,
  &\text{otherwise}.
  \end{cases}
  \label{eq:minmod}
\end{equation}
The modified minmod function introduces a parameter \(M\ge 0\) that relaxes
\eqref{eq:minmod} by
\begin{equation}
  \widetilde m(a,b,c;M,h)
  =
  \begin{cases}
  a,
  &\text{if } |a|\le Mh^2,\\[1mm]
  m(a,b,c),
  &\text{otherwise}.
  \end{cases}
  \label{eq:tvb-minmod}
\end{equation}
The choice \(M=0\) gives the TVD limiting, while \(M>0\) gives the TVB
limiting.

We now define the limiter $\Lambda:\mathbb{V}_h^k\to \mathbb{V}_h^k$. Let \(w\in \mathbb{V}_h^k\) be a candidate DG function. On cell \(I_i\), define the limited traces by
\begin{subequations}
\label{eq:limited-traces-for-poly}
\begin{align}
  \widetilde w_{i+1/2}^-
  &=
  \overline w_{i}
  +
  \widetilde m
  \left(
  w_{i+1/2}^- - \overline w_{i},
  \Delta_+\overline w_{i},
  \Delta_-\overline w_{i};
  M,h_i
  \right),
  \label{eq:limited-right-trace-poly}
  \\
  \widetilde w_{i-1/2}^+
  &=
  \overline w_{i}
  -
  \widetilde m
  \left(
  \overline w_{i} - w_{i-1/2}^+,
  \Delta_+\overline w_{i},
  \Delta_-\overline w_{i};
  M,h_i
  \right).
  \label{eq:limited-left-trace-poly}
\end{align}
\end{subequations}
A cell \(I_i\) is called 
\emph{troubled} if at least one of the two evaluations of \(\widetilde m\) in
\eqref{eq:limited-traces-for-poly} modifies its first argument. 
On cells that are not troubled, the limiter leaves the polynomial unchanged, i.e., $\Lambda(w)|_{I_i}=w|_{I_i}.$ 
On a troubled cell \(I_i\), the limiter replaces \(w|_{I_i}\) by the unique reconstructed polynomial in \(\mathbb P^{\min(k,2)}(I_i)\) that preserves the cell average while agreeing with the limited trace values in \eqref{eq:limited-traces-for-poly}, i.e.,
\begin{equation}
\overline {\Lambda(w)}_{i} = \overline{w}_i, \qquad 
  \bigl(\Lambda(w)\bigr)_{i+1/2}^{-}
  =
  \widetilde w_{i+1/2}^{-},
  \qquad
  \bigl(\Lambda(w)\bigr)_{i-1/2}^{+}
  =
  \widetilde w_{i-1/2}^{+}.
\end{equation}

Using the convention above, the limited forward-Euler DG method is
written as
\begin{equation}
  w_h^{n+1}
  =
  u_h^n+\Delta t\,\mathcal F(u_h^n),
  \qquad
  u_h^{n+1}
  =
  \Lambda(w_h^{n+1}).
  \label{eq:limited-fe-dg}
\end{equation}
Here \(u_h^n\) is the accepted DG solution at time \(t^n\), while
\(w_h^{n+1}\) is the unlimited candidate produced by the forward-Euler step.
Since \(\Lambda\) preserves cell averages, the cell-average update of
\(u_h^{n+1}\) is the same as that of \(w_h^{n+1}\).

The role of the limiter is to ensure that the traces of the accepted input
\(u_h^n\) are compatible with the neighboring cell averages. For the TVD
case \(M=0\), under the standard CFL condition associated with the monotone
flux, the cell-average update can be written in Harten form
\eqref{eq:harten-form}--\eqref{eq:harten-condition}. Consequently, the solution to \eqref{eq:limited-fe-dg} satisfies \eqref{eq:limited-fe-tvdm} and is TVDM. 
\begin{remark}[{TVBM}]\label{rem:tvbm}
    With TVB limiting ($M > 0$), the TVM seminorm may grow since $\widetilde{m}$ may return the argument $a$, which need not satisfy TVD bounds when $|a| \le Mh^2$. 
    However, the excess in each cell is bounded by $O(h^2)$, leading to a total variation growth of $O(h)$ per time step and hence remaining TVBM over any finite time interval \cite{shu1987tvb}. The TVB modification is crucial for retaining high-order accuracy near smooth extrema, where a strict TVD limiter would otherwise flatten it.
\end{remark}

\subsection{TVD/TVB stability in SSP-RKDG methods}
\label{subsec:ssp-rkdg}
SSP-RK methods are designed as convex combinations of forward-Euler steps. 
Therefore, if the forward-Euler building block is stable in certain seminorms under a time-step restriction, then the SSP method inherits the same stability under the corresponding CFL
restriction.

In the RKDG setting, this principle is combined with the limited
forward-Euler step \eqref{eq:limited-fe-dg} for the TVD/TVB stability. For example, the commonly used
third-order SSP-RKDG method can be written in the following form:
\begin{equation}
\begin{aligned}
  w_h^{(1)}
  &=
  u_h^n+\Delta t\,\mathcal F(u_h^n),
  &
  u_h^{(1)}
  &=
  \Lambda(w_h^{(1)}),
  \\
  w_h^{(2)}
  &=
  u_h^{(1)}+\Delta t\,\mathcal F(u_h^{(1)}),
  &
  u_h^{(2)}
  &=
  \frac34 u_h^n+
  \frac14\Lambda(w_h^{(2)}),
  \\
  w_h^{(3)}
  &=
  u_h^{(2)}+\Delta t\,\mathcal F(u_h^{(2)}),
  &
  u_h^{n+1}
  &=
  \frac13 u_h^n+
  \frac23\Lambda(w_h^{(3)}).
\end{aligned}
\label{eq:ssp-rk3-dg}
\end{equation}
Each \(w_h^{(j)}\) is an unlimited forward-Euler candidate, and each
\(\Lambda(w_h^{(j)})\) is the corresponding limited DG solution. The
accepted stage values are convex combinations of limited forward-Euler
outputs and previously accepted data.

For \(M=0\), the limited forward-Euler step is TVDM under the
appropriate CFL condition. Since \(\operatorname{TVM}\) is a seminorm and \eqref{eq:ssp-rk3-dg} is a convex combination of forward-Euler steps, the
TVDM inequality \eqref{eq:limited-fe-tvdm} is inherited by the SSP-RKDG method. With the TVB limiter \(M>0\), the same convexity argument gives its TVBM version.

The essential point is that the classical proof uses the SSP decomposition
of the time integrator. TVD/TVB stability is obtained because every stage can
be interpreted as a convex combination of limited forward-Euler updates. This
is a powerful framework, but it also ties the limiting strategy to the SSP
structure of the RK method. Generic RK methods that do
not admit such a convex forward-Euler representation are therefore outside
this classical argument.

\begin{remark}[{SSP restrictions}]
The SSP requirement imposes well-known \emph{order barriers} \cite{gottlieb1998total, ruuth2002two}. 
In particular, the classical four-stage fourth-order RK method is not an SSP method in the sense needed by the above TVD argument. Thus, within the
traditional SSP-RKDG framework, the choice of temporal discretization is
restricted by the availability of an SSP representation rather than by formal
accuracy alone.
\end{remark}

\section{New Framework for non-SSP Methods}
\label{sec:generic-rkdg}

We now introduce a framework distinct from the SSP argument in Section~\ref{sec:ssp-rkdg}, {which treats a fully discrete scheme} as a sequence of predictors at intermediate stages followed by a corrector at the final stage. 
The TVD/TVB structure is then imposed solely by modifying the numerical fluxes in the final corrector. This approach is independent of the predictors, and hence it applies 
universally to the standard RKDG, cRKDG, and ADER-DG methods. In the standard RKDG 
case, the predictors are computed via RK approximations using the usual 
DG operator $\mathcal{F}$; in the cRKDG case, the predictors are computed 
via RK approximations using a compact cell-local operator $\mathcal{G}$; in the 
ADER-DG case, the predictors are computed via a local space-time 
Galerkin method.

To maintain focus, we concentrate on RK formulations, 
discussing RKDG and cRKDG schemes in detail, while only briefly commenting 
on the application to the ADER-DG method in 
Section~\ref{sec:ADER}.

\subsection{Generic predictor-corrector formulations}
\label{subsec:generic-predictors}

Consider an explicit \(s\)-stage RK method with the Butcher tableau
\begin{equation}\label{eq:Butcher-RK}
    \begin{array}{c|c}
        c & A \\
        \hline
          & b 
    \end{array}
    \quad c = (c_1,\cdots,c_s)^\top, 
    \  b = (b_1,\cdots,b_s),
    \  A = (a_{\ell j})_{s\times s}, 
    \  a_{\ell j}=0 \text{ if } j \ge \ell.
\end{equation}

The standard RKDG method applies this RK integrator to the semi-discrete DG system
\eqref{eq:semi-dg}. Starting from an accepted solution \(u_h^n\), the
predictor stages are
\begin{equation}
  u_h^{(\ell)}
  =
  u_h^n
  +\Delta t\sum_{j=1}^{\ell-1}a_{\ell j}\mathcal F(u_h^{(j)}),
  \qquad \ell=1,\ldots,s,
  \label{eq:rkdg-predictor}
\end{equation}
and the corresponding raw RKDG corrector is
\begin{equation}
  u_h^{n+1}
  =
  u_h^n
  +\Delta t\sum_{\ell=1}^s b_\ell\mathcal F(u_h^{(\ell)}).
  \label{eq:rkdg-corrector-raw}
\end{equation}
If the RK method is not SSP, then the  argument in
Section~\ref{subsec:ssp-rkdg} does not provide a TVDM/TVBM mechanism for
\eqref{eq:rkdg-predictor}--\eqref{eq:rkdg-corrector-raw}.

For the cRKDG method, we define the cell-local operator
\(\mathcal G:\mathbb{V}_h^k\to \mathbb{V}_h^k\) by
\begin{equation}
  \int_{I_i}\mathcal G(u_h)v\,\mathrm{d}x
  =
  \int_{I_i}f(u_h)v_x\,\mathrm{d}x -
  \left[
  f(u_{h,i+1/2}^{-})v_{i+1/2}^{-}
  -
  f(u_{h,i-1/2}^{+})v_{i-1/2}^{+}
  \right]
\label{eq:G-def}
\end{equation}
for all \(v\in \mathbb{V}_h^k\). Compared with \(\mathcal F\), the operator \(\mathcal G\) uses only the inner traces from the current cell and therefore acts locally on each cell.
The cRKDG method replaces \(\mathcal F\) by \(\mathcal G\) in the predictor stages \eqref{eq:rkdg-predictor} and keeps \(\mathcal F\) in the corrector \eqref{eq:rkdg-corrector-raw}.

\begin{remark}[Incompatibility between cRKDG and SSP-RK]
Note that cRKDG schemes cannot be constructed with SSP-RK methods, otherwise the nonconservative operator $\cG$ would then be used to update $u_h^{n+1}$, violating conservation. Hence, to construct a TVD/TVB cRKDG method, the approach in Section~\ref{sec:ssp-rkdg} does not work. However, its TVD/TVB properties can be preserved within the new framework. 
\end{remark}

It is helpful to write both RKDG and cRKDG predictors in the unified form
\begin{equation}\label{eq:generic-predictor}
  u_h^{(\ell)}
  =
  u_h^n
  +\Delta t\sum_{j=1}^{\ell-1}a_{\ell j}\mathcal H(u_h^{(j)}),
  \qquad
  \ell=1,\ldots,s,
  \qquad
  \mathcal H\in\{\mathcal F,\mathcal G\}.
\end{equation}
The trace-limited corrector to be introduced is the same for both choices.

\subsection{Trace limiting and modified corrector operator}
\label{subsec:trace-limiter}

The polynomial limiter \(\Lambda\) defined in Section~\ref{subsec:poly-limiter}
returns a DG polynomial and preserves cell averages. The new limiter $\Lambda_{\mathrm{tr}}$ introduced
here has a different purpose. It modifies only the traces used in the final
numerical flux and does not reconstruct a DG polynomial.

We denote by $\mathbb{T}_h^k = \{ v|_{\partial \mathcal{T}_h} : v \in \mathbb{V}_h^k \}$ the space of double-valued traces for functions in $\mathbb{V}_h^k$, where $\partial \mathcal{T}_h = \bigcup_{i=1}^N \{ \partial I_i \}$ represents the collection of all element boundaries. 
Let $v \in \mathbb{V}_h^k$ be a DG function providing the raw traces to be modified, and let $\bar{q} \in \mathbb{V}_h^0$ be a piecewise constant function prescribing the reference cell averages. 
The trace limiter is defined as the mapping $\Lambda_{\mathrm{tr}}: \mathbb{V}_h^k \times \mathbb{V}_h^0 \to \mathbb{T}_h^k$, where the output $\widetilde{v} = \Lambda_{\mathrm{tr}}(v, \bar{q}) = \{ \widetilde{v}_{i-1/2}^+, \widetilde{v}_{i+1/2}^- \}_{i=1}^N$
denotes the limited traces on $\partial \mathcal{T}_h$ such that
\begin{subequations}
\label{eq:trace-limiter-def}
\begin{align}
  \widetilde v_{i+1/2}^{-}
  &=
  \overline q_i
  +
  \widetilde m
  \left(
  v_{i+1/2}^{-}-\overline q_i,
  \Delta_+\overline q_i,
  \Delta_-\overline q_i;
  M,h_i
  \right),
  \label{eq:trace-limiter-right}
  \\
  \widetilde v_{i-1/2}^{+}
  &=
  \overline q_i
  -
  \widetilde m
  \left(
  \overline q_i-v_{i-1/2}^{+},
  \Delta_+\overline q_i,
  \Delta_-\overline q_i;
  M,h_i
  \right).
  \label{eq:trace-limiter-left}
\end{align}
\end{subequations}
Note that \eqref{eq:trace-limiter-def} generalizes \eqref{eq:limited-traces-for-poly} by allowing $\overline{q}_i \neq \overline{v}_{i}$. The raw traces and reference cell averages can be completely unrelated.

We now generalize the DG operator in \eqref{eq:F-def} by separating the volume and facial contributions, which yields $\cF(\cdot;\cdot):\mathbb{V}_h^k\times \mathbb{T}_h^k\to \mathbb{V}_h^k$. Given \(v\in \mathbb{V}_h^k\) and limited traces
\(\widetilde v \in \mathbb{T}_h^k\), we define \(\mathcal F(v;\widetilde v)\in \mathbb{V}_h^k\) by setting
\begin{equation}
\begin{aligned}
  \int_{I_i}\mathcal F(v;\widetilde v)\phi\,\mathrm{d}x
  =\int_{I_i}f(v)\phi_x\,\mathrm{d}x -
  \left[
  \widehat f(\widetilde v_{i+\hf}^{-},\widetilde v_{i+\hf}^{+})
  \phi_{i+\hf}^{-}
  -
  \widehat f(\widetilde v_{i-\hf}^{-},\widetilde v_{i-\hf}^{+})
  \phi_{i-\hf}^{+}
  \right]
\end{aligned}
\label{eq:F-two-argument}
\end{equation}
for all \(\phi\in \mathbb{V}_h^k\). 
The volume term is evaluated using the input raw polynomial \(v\), while only
the numerical fluxes are evaluated using the limited traces \(\widetilde v\).
If the limited traces coincide with the
original traces of \(v\), then \(\mathcal F(v;\widetilde v)=\mathcal F(v)\).

\subsection{Proposed trace-limited scheme}
\label{subsec:proposed-scheme}

We now define the proposed method. Let $u_h^n\in \mathbb{V}_h^k$ be the accepted DG solution at time $t^n$, and let $\overline{u}_h^n \in \mathbb{V}_h^0$ be the piecewise constant function defined by the cell averages. First, we compute the predictor stages via \eqref{eq:generic-predictor}, with either the RKDG or cRKDG predictors.
Next, apply the trace limiter to each predictor stage using the initial cell averages $\overline{u}_h^n$ as the reference:
\begin{equation}
  \widetilde{u}_h^{(\ell)} = \Lambda_{\mathrm{tr}} \left( u_h^{(\ell)}; \overline{u}_h^n \right), \qquad \ell=1,\ldots,s.
  \label{eq:proposed-trace-limiting}
\end{equation}
Then, the corrected candidate at the new time level is defined by
\begin{equation}
  w_h^{n+1}
  =
  u_h^n
  +\Delta t\sum_{\ell=1}^s
  b_\ell\mathcal F(u_h^{(\ell)};\widetilde u_h^{(\ell)}).
  \label{eq:proposed-corrector}
\end{equation}
Finally, the accepted solution is obtained by the polynomial limiter
\begin{equation}
  u_h^{n+1}
  =
  \Lambda(w_h^{n+1}).
  \label{eq:proposed-post-limiter}
\end{equation}
Equations \eqref{eq:generic-predictor}, \eqref{eq:proposed-trace-limiting}--\eqref{eq:proposed-post-limiter}
form the proposed trace-limited RKDG/cRKDG scheme.

The proposed scheme separates the predictors from the limiting
mechanism. In \eqref{eq:generic-predictor}, the choice \(\mathcal H=\mathcal F\)  gives a trace-limited RKDG method, while \(\mathcal H=\mathcal G\) gives a trace-limited cRKDG method.
The final corrector \eqref{eq:proposed-corrector} and the final polynomial
limiter \eqref{eq:proposed-post-limiter} are identical in both cases.

\begin{remark}[{Reference cell averages}]
The crucial point is that the reference averages in
\eqref{eq:proposed-trace-limiting} are the old-time averages
\(\overline{u}_h^n\), rather than the predictor-stage averages
\(\overline{u}_h^{(\ell)}\). This choice is what allows the final
cell-average corrector \eqref{eq:proposed-corrector} to be analyzed through
Harten's lemma. The predictor stages themselves are not required to satisfy an
SSP decomposition, nor are they limited as DG polynomials. Note that the trace limiters \eqref{eq:proposed-trace-limiting} can be evaluated in parallel without interfering with the sequential predictor updates, potentially yielding better parallel efficiency.
\end{remark}

Throughout the analysis below, we assume that the corrector weights satisfy
\begin{equation}
  b_\ell\ge 0,
  \qquad
  \sum_{\ell=1}^s b_\ell=1.
  \label{eq:positive-b-weights}
\end{equation}
This condition is much weaker than requiring the full RK method to 
be SSP. It is satisfied by many commonly used RK integrators, such as 
Heun’s third-order method and the classical fourth-order RK method discussed 
in the following subsection. However, some specially constructed RK time 
integrators may not satisfy this condition, such as the counterexample 
in \cite{gottlieb1998total} for demonstrating the necessity of SSP 
time integrators.

\subsection{Concrete cRKDG schemes}
\label{subsec:concrete-crkdg}

We now list concrete cRKDG schemes used in the numerical tests. The RKDG versions are obtained by replacing $\mathcal{G}$ by $\mathcal{F}$. 

\emph{Heun’s third-order method:}
\begin{equation}
\begin{aligned}
  u_h^{(1)}&=u^{n}_{h},\;\; u_h^{(2)}
  =
  u_h^n+\frac{\Delta t}{3}\mathcal G(u_h^{(1)}),\;\; 
  u_h^{(3)}
  =
  u_h^n+\frac{2\Delta t}{3}\mathcal G(u_h^{(2)}),
  \\
  \widetilde u_h^{(\ell)}
  &=
  \Lambda_{\mathrm{tr}}
  \left(u_h^{(\ell)};\overline{u}_h^n\right),\quad \ell=1,3,
  \\
  w_h^{n+1}
  &=
  u_h^n
  +\Delta t
  \left[
  \frac14\mathcal F(u_h^{(1)};\widetilde u_h^{(1)})
  +
  \frac34\mathcal F(u_h^{(3)};\widetilde u_h^{(3)})
  \right],
  \\
  u_h^{n+1}
  &=
  \Lambda(w_h^{n+1}).
\end{aligned}
\label{eq:crkdg3-limited}
\end{equation}

\emph{Classic fourth-order method:}
\begin{equation}
\begin{aligned}
  u_h^{(1)}&=u_{h}^{n},\; u_h^{(2)}
  =
  u_h^n+\frac{\Delta t}{2}\mathcal G(u_h^{(1)}), \; 
  u_h^{(3)}
  =
  u_h^n+\frac{\Delta t}{2}\mathcal G(u_h^{(2)}),
  \;
  u_h^{(4)}
  =
  u_h^n+\Delta t\,\mathcal G(u_h^{(3)}),
  \\
  \widetilde u_h^{(j)}
  &=
  \Lambda_{\mathrm{tr}}
  \left(u_h^{(j)};\overline{u}_h^n\right),
  \qquad j=1,2,3,4,
  \\
  w_h^{n+1}
  &=
  u_h^n
  +\Delta t
  \left[
  \frac16\mathcal F(u_h^{(1)};\widetilde u_h^{(1)})
  +
  \frac13\mathcal F(u_h^{(2)};\widetilde u_h^{(2)})
  +
  \frac13\mathcal F(u_h^{(3)};\widetilde u_h^{(3)})
  +
  \frac16\mathcal F(u_h^{(4)};\widetilde u_h^{(4)})
  \right],
  \\
  u_h^{n+1}
  &=
  \Lambda(w_h^{n+1}).
\end{aligned}
\label{eq:crkdg4-limited}
\end{equation}

\begin{remark}[{On $\mathcal{F}(u_h^{(1)}; \widetilde{u}_h^{(1)})$}]
The two-argument form $\mathcal{F}(u_h^{(1)}; \widetilde{u}_h^{(1)})$ is simply $\mathcal{F}(u_h^n)$, since $u_h^{(1)} = u_h^n$ and $\widetilde{u}_h^{(1)} = \Lambda_{\mathrm{tr}}(u_h^n; \overline{u}_h^n) = u_h^n|_{\partial\mathcal{T}_h}$ by the design of limiters.
\end{remark}

\begin{remark}[{Compactness}]
The proposed limiting strategy is particularly useful for preserving compactness when paired with the cRKDG method. In an SSP-RKDG method, each forward-Euler substep
uses the neighbor-coupled operator $\mathcal{F}$ and is followed by limiting,
so the dependence stencil grows through the internal stages. In the cRKDG scheme, the predictor stages rely solely on the local 
operator $\mathcal{G}$, while intercell coupling and trace limiting 
are deferred exclusively to the final corrector stage. This keeps the predictor part compact while preserving the
TVD/TVB-compatible structure needed for the cell-average update.
\end{remark}

\subsection{Applications to ADER-DG methods}\label{sec:ADER}
Although this paper primarily focuses on RKDG and cRKDG methods, the proposed predictor-corrector framework naturally accommodates the ADER-DG method, which is derived from a space-time weak formulation of the conservation law. Below, we discuss one version of this method that utilizes a local predictor \cite{gaburro2023high,dumbser2008unified,dumbser2008finite}.

The first step is to compute a predictor using a local space-time Galerkin method for \eqref{eq:scl}: find $z_h \in \mathbb{P}^{k'}(I_i \times [t^n,t^{n+1}])$, such that for all $w_h \in \mathbb{P}^{k'}(I_i\times[t^n,t^{n+1}])$, 
\begin{equation}\label{eq:ADER-I}
\int_{I_i} 
z_h(x,t^{n+1}) w_h(x,t^{n+1}) - u_h^n w_h(x,t^{n}) \, \mathrm{d} x
+\int_{t^n}^{t^{n+1}}\int_{I_i} 
-z_h(w_h)_t + f(z_h)_x  w_h \, \mathrm{d} x \, \mathrm{d} t = 0.
\end{equation}
Then, the corrector $u_h^{n+1} \in \mathbb{V}_h^k$ is updated via 
\begin{equation}\label{eq:ADER-II}
    u_h^{n+1} = u_h^n + \int_{t^n}^{t^{n+1}} \mathcal{F}(z_h(x,t)) \, \mathrm{d} t.
\end{equation}

Applying a quadrature rule on $[t^n,t^{n+1}]$ with nodes $\{t^{n,\ell}\}$ and weights $\{b_\ell\}$, one can rewrite \eqref{eq:ADER-II} in a form similar to \eqref{eq:rkdg-corrector-raw}, where $u_h^{(\ell)}(x) = z_h(x,t^{n,\ell})$. Consequently, \eqref{eq:ADER-I}, the relation $u_h^{(\ell)}(x) = z_h(x,t^{n,\ell})$, and \eqref{eq:proposed-trace-limiting}--\eqref{eq:proposed-post-limiter} constitute the trace-limited ADER-DG scheme. 
This scheme shares similar theoretical properties with its RKDG and cRKDG counterparts, and the details are omitted.

\section{Properties of Trace-Limited Schemes}
\label{sec:properties}

In this section, we establish the main properties of the trace-limited scheme \eqref{eq:generic-predictor}, \eqref{eq:proposed-trace-limiting}--\eqref{eq:proposed-post-limiter} for the 1D scalar equation \eqref{eq:scl} with periodic or compactly supported boundaries. 
Throughout the section, we assume a uniform mesh with cell size \(h\), and write $\lambda={\Delta t}/{h}.$

\subsection{Conservation}
\label{subsec:conservation}

We first note that the trace-limited corrector preserves the conservative
form of the DG method. Although the trace limiter modifies the left and right
traces entering the numerical flux, the numerical flux at each interface is
still single-valued. Therefore the flux leaving one cell is the same as the
flux entering the neighboring cell. This observation leads to the following proposition on conservation, whose  detailed proof is omitted. 

\begin{proposition}[Local and global conservation]
\label{prop:conservation}
The trace-limited scheme \eqref{eq:generic-predictor}, \eqref{eq:proposed-trace-limiting}--\eqref{eq:proposed-post-limiter} is locally
conservative. In particular, under periodic or compactly supported boundary conditions, $
\sum_i h\,\overline u_{h,i}^{n+1}
=
\sum_i h\,\overline u_{h,i}^{n}$.
\end{proposition}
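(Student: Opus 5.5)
The plan is to test the corrector \eqref{eq:proposed-corrector} with $\phi=\chi_{I_i}$, the indicator of the cell $I_i$, which belongs to $\mathbb{V}_h^k$. Then we pass the result through the polynomial limiter \eqref{eq:proposed-post-limiter}, and finally sum over cells.

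First, take $\phi=\chi_{I_i}$ in the two-argument operator \eqref{eq:F-two-argument}. The volume term vanishes because $\phi_x=0$ on $I_i$. The traces satisfy $\phi^-_{i+1/2}=\phi^+_{i-1/2}=1$. This gives
\begin{equation*}
h\,\overline{\mathcal F(u_h^{(\ell)};\widetilde u_h^{(\ell)})}_i
= -\Bigl[\widehat f\bigl(\widetilde u_{h,i+\hf}^{(\ell),-},\widetilde u_{h,i+\hf}^{(\ell),+}\bigr)
-\widehat f\bigl(\widetilde u_{h,i-\hf}^{(\ell),-},\widetilde u_{h,i-\hf}^{(\ell),+}\bigr)\Bigr].
\end{equation*}
Define the single-valued interface flux
\begin{equation*}
\widehat F_{i+1/2}=\sum_{\ell} b_\ell\,\widehat f\bigl(\widetilde u_{h,i+\hf}^{(\ell),-},\widetilde u_{h,i+\hf}^{(\ell),+}\bigr).
\end{equation*}
Averaging \eqref{eq:proposed-corrector} over $I_i$ then yields
\begin{equation*}
\overline w_{h,i}^{n+1}=\overline u_{h,i}^n-\lambda\bigl(\widehat F_{i+1/2}-\widehat F_{i-1/2}\bigr).
\end{equation*}
The key point to verify is that $\widehat F_{i+1/2}$ depends only on the interface $x_{i+1/2}$, not on which cell is being updated. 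This holds because the limited traces $\widetilde u^{(\ell),\pm}_{h,i+1/2}$ are defined once per interface side by $\Lambda_{\mathrm{tr}}$ in \eqref{eq:trace-limiter-def}. The cell $I_i$ uses the same pair of arguments at $x_{i+1/2}$ as the cell $I_{i+1}$ does at its left face.

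Next, recall that $\Lambda$ preserves cell averages by construction in Section~\ref{subsec:poly-limiter}, so $\overline u_{h,i}^{n+1}=\overline w_{h,i}^{n+1}$. This local flux form is exactly the local conservation statement.

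Finally, multiply by $h$ and sum over $i$. The flux differences telescope. The boundary terms cancel under periodicity, since $\widehat F_{1/2}=\widehat F_{N+1/2}$. For compactly supported data, the solution is constant (zero) near the boundary, where the limiter and the consistency $\widehat f(u,u)=f(u)$ make the boundary fluxes equal.

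I expect no serious obstacle. The only subtle point is the observation about single-valuedness, namely that the modified traces, though computed cellwise with reference $\overline u_h^n$, still give one flux per interface. In the compact-support case, some care is also needed about boundary fluxes, possibly by assuming the support stays away from the boundary over one step.
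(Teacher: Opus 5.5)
Your proposal is correct and follows the route the paper indicates. The paper omits the detailed proof and rests it on the observation that the limited traces still give a single-valued numerical flux at each interface, so testing the corrector with the cell indicator, using that $\Lambda$ preserves cell averages, and telescoping is the intended argument; your caveat about keeping the support of the predictor stages away from the boundary in the compactly supported case (the support can grow when $\mathcal H=\mathcal F$) is a reasonable way to make the boundary-flux cancellation rigorous.
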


\subsection{TVDM and TVBM properties}
\label{subsec:generalized-harten}

The key of the proposed method is that the TVD mechanism depends only
on the final cell-average update, not on how the predictor stages are
computed. The following lemma makes this precise. It shows that a
forward-Euler-type update is TVDM even if the raw traces
entering the limiter are arbitrary, provided that the limiter uses the old
cell averages as reference values. 
\begin{lemma}[General trace limiting satisfying the Harten form] 
\label{lem:trace-harten}
Let \(\widehat f\) be a monotone numerical flux with Lipschitz constants
\(L_1\) and \(L_2\) in its first and second arguments, respectively. For any DG functions $v\in \mathbb{V}_h^k$ and any reference cell averages $q=\overline{q}\in \mathbb{V}_h^0$ potentially irrelevant to $v$, define the TVD-limited traces $\widetilde v=\Lambda_{\mathrm{tr}}(v,\overline q)$ through \eqref{eq:trace-limiter-def} with $M = 0$ and $h_i \equiv h$. 
Consider the cell-average update
\begin{equation}
  \overline r_i
  =
  \overline q_i
  -\lambda
  \left[
  \widehat f(\widetilde{v}_{i+1/2}^-,\widetilde{v}_{i+1/2}^+)
  -
  \widehat f(\widetilde{v}_{i-1/2}^-,\widetilde{v}_{i-1/2}^+)
  \right].
  \label{eq:lemma-fe-update}
\end{equation}
If $\lambda\le {1}/{(2(L_1+L_2))}$, then
\eqref{eq:lemma-fe-update} admits the Harten form in Lemma \ref{lem:harten}, and thus $\operatorname{TVM}(r):=\sum_i |\overline{r}_{i+1}-\overline{r}_i|\leq \sum_i |\overline{q}_{i+1}-\overline{q}_i|:=\operatorname{TVM}(q)$.
\end{lemma}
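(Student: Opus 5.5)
We rewrite the cell-average update \eqref{eq:lemma-fe-update} in Harten form \eqref{eq:harten-form} by splitting the flux difference with the monotonicity and Lipschitz properties of $\widehat f$. The only facts we need about the limited traces are the following. With $M=0$, the minmod function \eqref{eq:minmod} returns either $0$ or a value with the common sign $s$ of its arguments and absolute value at most $\min\{|\Delta_+\overline q_i|,|\Delta_-\overline q_i|\}$. Hence
\begin{equation*}
\widetilde v_{i+1/2}^- - \overline q_i = \alpha_i\,\Delta_+\overline q_i = \beta_i\,\Delta_-\overline q_i,\qquad
\overline q_i - \widetilde v_{i-1/2}^+ = \gamma_i\,\Delta_+\overline q_i = \delta_i\,\Delta_-\overline q_i,
\end{equation*}
with $\alpha_i,\beta_i,\gamma_i,\delta_i\in[0,1]$. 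When a difference vanishes, the minmod returns $0$ and the coefficient is set to $0$. Nothing here depends on the raw traces $v$; this is why $v$ can be arbitrary.

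Next we split the flux difference as
\begin{equation*}
\widehat f(\widetilde v_{i+1/2}^-,\widetilde v_{i+1/2}^+)-\widehat f(\widetilde v_{i-1/2}^-,\widetilde v_{i-1/2}^+)
=\bigl[\widehat f(\widetilde v_{i+1/2}^-,\widetilde v_{i+1/2}^+)-\widehat f(\widetilde v_{i+1/2}^-,\widetilde v_{i-1/2}^+)\bigr]
+\bigl[\widehat f(\widetilde v_{i+1/2}^-,\widetilde v_{i-1/2}^+)-\widehat f(\widetilde v_{i-1/2}^-,\widetilde v_{i-1/2}^+)\bigr].
\end{equation*}
In the first bracket only the second argument changes, from $\widetilde v_{i-1/2}^+$ to $\widetilde v_{i+1/2}^+$. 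We write that argument increment as
\begin{equation*}
\widetilde v_{i+1/2}^+-\widetilde v_{i-1/2}^+=(\widetilde v_{i+1/2}^+-\overline q_{i+1})+\Delta_+\overline q_i+(\overline q_i-\widetilde v_{i-1/2}^+)=(1-\delta_{i+1}+\gamma_i)\,\Delta_+\overline q_i,
\end{equation*}
using $\widetilde v_{i+1/2}^+=\overline q_{i+1}-\delta_{i+1}\Delta_-\overline q_{i+1}$ and $\Delta_-\overline q_{i+1}=\Delta_+\overline q_i$. The coefficient lies in $[0,2]$. Since $\widehat f$ is nonincreasing and $L_2$-Lipschitz in its second argument, the first bracket equals $-\mu_i(1-\delta_{i+1}+\gamma_i)\Delta_+\overline q_i$, where $\mu_i\in[0,L_2]$ is a difference quotient, set to $0$ when the increment vanishes. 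Therefore $C_{i+1/2}:=\lambda\mu_i(1-\delta_{i+1}+\gamma_i)\in[0,2\lambda L_2]$.

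The second bracket is handled symmetrically. The first argument changes from $\widetilde v_{i-1/2}^-$ to $\widetilde v_{i+1/2}^-$, and
\begin{equation*}
\widetilde v_{i+1/2}^--\widetilde v_{i-1/2}^-=(1+\beta_i-\alpha_{i-1})\,\Delta_-\overline q_i.
\end{equation*}
Monotonicity in the first argument gives $D_{i-1/2}\in[0,2\lambda L_1]$. The update then reads $\overline r_i=\overline q_i+C_{i+1/2}\Delta_+\overline q_i-D_{i-1/2}\Delta_-\overline q_i$.

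It remains to check the Harten conditions \eqref{eq:harten-condition}. Both coefficients are nonnegative, and at a fixed interface $C_{i+1/2}+D_{i+1/2}\le 2\lambda(L_1+L_2)\le1$ under the stated CFL bound. Lemma~\ref{lem:harten} then yields $\operatorname{TVM}(r)\le\operatorname{TVM}(q)$.

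The main obstacle is the bookkeeping in the choice of telescoping intermediate state and the index matching, so that $C$ and $D$ attach to the correct interfaces and each depends only on the differences $\Delta_\pm\overline q$. The degenerate cases where $\Delta_\pm\overline q_i=0$ also need care: there the minmod forces the trace to equal $\overline q_i$, which makes the representations above valid with zero coefficients. The factor $2$ in the CFL condition comes from the coefficient bounds $1-\delta_{i+1}+\gamma_i\le 2$ and $1+\beta_i-\alpha_{i-1}\le 2$.
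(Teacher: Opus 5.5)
Your proposal is correct and follows essentially the same route as the paper: the same intermediate flux $\widehat f(\widetilde v_{i+1/2}^-,\widetilde v_{i-1/2}^+)$, the same three-term decomposition of the trace increments with coefficients in $[0,2]$, and hence the bounds $0\le C_{i+1/2}\le 2\lambda L_2$ and $0\le D_{i-1/2}\le 2\lambda L_1$. Writing the limited deviations as multiples such as $\alpha_i\Delta_+\overline q_i$, rather than as the quotients used in the paper, is a slightly cleaner way to handle the degenerate cases. In those cases $\Delta_\pm\overline q_i$ or the trace increments vanish, which the paper's quotient formulas leave implicit.
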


\begin{proof}
Adding and subtracting
\(\widehat f(\widetilde{v}_{i+1/2}^-,\widetilde{v}_{i-1/2}^+)\), \eqref{eq:lemma-fe-update} can be rewritten as 
\begin{equation}
  \overline r_i
  =
  \overline q_i
  +C_{i+1/2}\Delta_+\overline q_i
  -D_{i-1/2}\Delta_-\overline q_i,
  \label{eq:lemma-harten-form}
\end{equation}
where
\begin{align}
  C_{i+1/2}
  ={}&
  -\lambda
  \frac{
  \widehat f(\widetilde{v}_{i+1/2}^-,\widetilde{v}_{i+1/2}^+)
  -
  \widehat f(\widetilde{v}_{i+1/2}^-,\widetilde{v}_{i-1/2}^+)
  }{
  \widetilde{v}_{i+1/2}^+-\widetilde{v}_{i-1/2}^+
  }\cdot
  \frac{
  \widetilde{v}_{i+1/2}^+-\widetilde{v}_{i-1/2}^+
  }{
  \Delta_+\overline q_i
  },
  \label{eq:C-coeff}
  \\
  D_{i-1/2}
  ={}&
  \lambda
  \frac{
  \widehat f(\widetilde{v}_{i+1/2}^-,\widetilde{v}_{i-1/2}^+)
  -
  \widehat f(\widetilde{v}_{i-1/2}^-,\widetilde{v}_{i-1/2}^+)
  }{
  \widetilde{v}_{i+1/2}^- - \widetilde{v}_{i-1/2}^-
  }\cdot
  \frac{
  \widetilde{v}_{i+1/2}^- - \widetilde{v}_{i-1/2}^-
  }{
  \Delta_-\overline q_i
  }.
  \label{eq:D-coeff}
\end{align}

Since \(\widehat f\) is nonincreasing in its second argument and Lipschitz
continuous with constant \(L_2\), we have
\begin{equation}
  0
  \le
  -
  \frac{
  \widehat f(\widetilde{v}_{i+1/2}^-,\widetilde{v}_{i+1/2}^+)
  -
  \widehat f(\widetilde{v}_{i+1/2}^-,\widetilde{v}_{i-1/2}^+)
  }{
  \widetilde{v}_{i+1/2}^+-\widetilde{v}_{i-1/2}^+
  }
  \le L_2.
  \label{eq:flux-ratio-second}
\end{equation}
It remains to estimate the ratios of limited traces. The trace \(\widetilde{v}_{i-1/2}^+\) is obtained from the cell
\(I_i\), while \(\widetilde{v}_{i+1/2}^+\) is obtained from the neighboring cell
\(I_{i+1}\). Thus the backward difference entering the limiter that defines
\(\widetilde{v}_{i+1/2}^+\) is
\begin{equation}
\Delta_-\overline q_{i+1}
  =
  \overline q_{i+1}-\overline q_i
  =
  \Delta_+\overline q_i.
\end{equation}
By the definition of the
minmod function, the quantities $\overline q_i-\widetilde{v}_{i-1/2}^+$ and $\overline q_{i+1}-\widetilde{v}_{i+1/2}^+$ have the same sign as \(\Delta_+\overline q_i\), whenever they are
nonzero, and their magnitudes do not exceed
\(|\Delta_+\overline q_i|\). Hence
\begin{equation}
  0
  \le
  \frac{
  \overline q_i-\widetilde{v}_{i-1/2}^+
  }{
  \Delta_+\overline q_i
  }
  \le 1,
  \qquad
  0
  \le
  \frac{
  \overline q_{i+1}-\widetilde{v}_{i+1/2}^+
  }{
  \Delta_+\overline q_i
  }
  \le 1.
  \label{eq:right-ratio-components}
\end{equation}
Therefore
\begin{equation}
\begin{aligned}
  \frac{
  \widetilde{v}_{i+1/2}^+-\widetilde{v}_{i-1/2}^+
  }{
  \Delta_+\overline q_i
  }
  ={}&
  1
  -
  \frac{
  \overline q_{i+1}-\widetilde{v}_{i+1/2}^+
  }{
  \Delta_+\overline q_i
  }
  +
  \frac{
  \overline q_i-\widetilde{v}_{i-1/2}^+
  }{
  \Delta_+\overline q_i
  } \in [0,2].
\end{aligned}
\label{eq:right-trace-ratio}
\end{equation}
Combining \eqref{eq:C-coeff},  \eqref{eq:flux-ratio-second} and \eqref{eq:right-trace-ratio} yields $0\le C_{i+1/2}\le 2\lambda L_2$. 
Similarly, one can prove that $0\le D_{i-1/2}\le 2\lambda L_1.$
Consequently, when $\lambda \leq 1/(2(L_1+L_2))$, we have $
  C_{i+1/2}+D_{i+1/2}
  \le
  2\lambda(L_1+L_2)
  \le 1.$
Harten's lemma then gives the TVDM property.
\end{proof}

\begin{remark}[{Arbitrary traces}]
The lemma is independent of how the raw traces
\(v_{i+1/2}^-\) and \(v_{i-1/2}^+\) are produced. They may be the traces
of the old solution, traces of a standard RKDG predictor stage, or traces of a
cRKDG predictor stage. Only the limited traces and the old reference cell
averages enter the TVDM argument.
\end{remark}

We now apply Lemma~\ref{lem:trace-harten} to the full trace-limited
RK corrector. 
The result uses only the final corrector
\eqref{eq:proposed-corrector} and it does not depend on whether the predictor
stages were computed with \(\mathcal F\) or \(\mathcal G\).

\begin{theorem}[TVDM]
\label{thm:proposed-tvdm}
Consider the trace-limited scheme
\eqref{eq:generic-predictor}, \eqref{eq:proposed-trace-limiting}--\eqref{eq:proposed-post-limiter} with the TVD
limiter \(M=0\). Assume that the corrector weights satisfy
\eqref{eq:positive-b-weights}
and that the monotone numerical flux \(\widehat f\) has Lipschitz constants
\(L_1\) and \(L_2\) in its first and second arguments, respectively. 
If $\lambda\le 1/{(2(L_1+L_2))}$, 
then $\TVM(u_h^{n+1})
  \le
  \TVM(u_h^n).$
\end{theorem}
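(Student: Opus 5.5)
The plan is to reduce the full corrector to a convex combination of forward-Euler-type cell-average updates, each covered by Lemma~\ref{lem:trace-harten}, and then use that $\TVM$ is a seminorm.

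First I will take $v=1$ in \eqref{eq:F-two-argument}. Because $\phi_x=0$, the volume term vanishes, so the cell average of $\mathcal F(u_h^{(\ell)};\widetilde u_h^{(\ell)})$ on $I_i$ equals
\[
-\frac1h\Bigl[\widehat f(\widetilde u^{(\ell),-}_{i+1/2},\widetilde u^{(\ell),+}_{i+1/2})-\widehat f(\widetilde u^{(\ell),-}_{i-1/2},\widetilde u^{(\ell),+}_{i-1/2})\Bigr].
\]
This quantity depends only on the limited traces, not on the raw polynomial $u_h^{(\ell)}$. Averaging \eqref{eq:proposed-corrector} over $I_i$ then gives
\[
\overline w_i^{n+1}=\overline u_i^n-\lambda\sum_\ell b_\ell\bigl[\widehat f(\cdots)_{i+1/2}-\widehat f(\cdots)_{i-1/2}\bigr].
\]
Using $\sum_\ell b_\ell=1$ from \eqref{eq:positive-b-weights}, I will rewrite this as $\overline w_i^{n+1}=\sum_\ell b_\ell\,\overline r^{(\ell)}_i$. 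Here $\overline r^{(\ell)}$ is exactly the update \eqref{eq:lemma-fe-update} with $v=u_h^{(\ell)}$ and $\overline q=\overline u_h^n$. Since $\widetilde u_h^{(\ell)}=\Lambda_{\mathrm{tr}}(u_h^{(\ell)},\overline u_h^n)$ with $M=0$, this matches the hypotheses of Lemma~\ref{lem:trace-harten}. The predictor traces are arbitrary there, so it does not matter whether $\mathcal H=\mathcal F$ or $\mathcal G$.

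Second, under $\lambda\le 1/(2(L_1+L_2))$, Lemma~\ref{lem:trace-harten} gives $\TVM(r^{(\ell)})\le\TVM(u_h^n)$ for each $\ell$. Since $b_\ell\ge0$ and $\TVM$ is a seminorm on cell averages (it is sublinear and positively homogeneous), I obtain
\[
\TVM(w_h^{n+1})\le\sum_\ell b_\ell\,\TVM(r^{(\ell)})\le\TVM(u_h^n).
\]
Finally, the polynomial limiter $\Lambda$ preserves cell averages, and $\TVM$ depends only on cell averages. Hence $\TVM(u_h^{n+1})=\TVM(w_h^{n+1})$, which completes the chain.

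The step I expect to need the most care is the first one. I must verify that the volume term really drops out for the cell average. I must also check that the single-valued interface flux uses limited traces from both adjacent cells, each computed with the common reference $\overline u_h^n$. This is exactly what makes each $\overline r^{(\ell)}$ fit the form of Lemma~\ref{lem:trace-harten}. The remainder is routine convexity.
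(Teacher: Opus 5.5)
Your proposal is correct and follows the paper's proof essentially step for step. You average the corrector, use $\sum_\ell b_\ell=1$ to write $\overline w_h^{n+1}$ as a $b_\ell$-convex combination of forward-Euler updates with common reference $\overline u_h^n$, apply Lemma~\ref{lem:trace-harten} stagewise, and conclude by convexity of $\TVM$ together with the cell-average preservation of $\Lambda$.
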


\begin{proof}
Since the polynomial limiter \(\Lambda\) preserves cell averages, $\TVM(u_h^{n+1})=\TVM(w_h^{n+1})$. It suffices to prove that \(\TVM(w_h^{n+1})\le \TVM(u_h^n)\). Indeed, taking cell averages in the corrector \eqref{eq:proposed-corrector} and using
\(\sum_{\ell=1}^s b_\ell=1\), we have
\begin{equation}
  \overline w_{h,i}^{n+1}
  =
  \sum_{\ell=1}^s b_\ell
  \Bigl(
  \overline u_{h,i}^n
  -\lambda
  \Bigl[
  \widehat f
  \left(
  \widetilde u_{h,i+1/2}^{(\ell),-},
  \widetilde u_{h,i+1/2}^{(\ell),+}
  \right) -
  \widehat f
  \left(
  \widetilde u_{h,i-1/2}^{(\ell),-},
  \widetilde u_{h,i-1/2}^{(\ell),+}
  \right)
  \Bigr]
  \Bigr):=\sum_{\ell=1}^s b_\ell \overline{r}_{h,i}^{(\ell)}.
\label{eq:convex-corrector-avg}
\end{equation}
For each fixed stage \(\ell\), the expression defining $\overline{r}_{h,i}^{(\ell)}$ is a
forward-Euler update of the form \eqref{eq:lemma-fe-update}. 
By construction, the trace limiter uses the old cell averages
\(\overline{u}_h^n\) as reference, namely $\overline q = \overline u_h^n$. Hence Lemma~\ref{lem:trace-harten}
applies to each stage contribution, giving $\TVM(r_h^{(\ell)})
  \le
  \TVM(u_h^n)$.

By \eqref{eq:positive-b-weights} and the convexity of the TVM seminorm, we have
\begin{equation}
    \TVM(w_h^{n+1})
  \le
  \sum_{\ell=1}^s b_\ell \TVM(r_h^{(\ell)})
  \le
  \sum_{\ell=1}^s b_\ell \TVM(u_h^n)
  =
  \TVM(u_h^n).
\end{equation}
This completes the proof.
\end{proof}
\begin{remark}[Role of $\Lambda$]
From the proof, one can see that the TVDM property can still be preserved even without the polynomial limiter $\Lambda$. 
However, the resulting DG polynomial may still contain significant higher-order oscillations, although its cell averages remain TVD. 
Thus, \(\Lambda\) is needed for practical robustness.
\end{remark}
\begin{remark}[TVBM]
\label{rem:proposed-tvbm}
Similar to the SSP-RKDG method, for \(M>0\), the TVB limiter no longer guarantees strict TVDM. The modified minmod may accept the first argument when its magnitude is below \(Mh^2\), even if it violates TVD bounds. This deviation is controlled by the \(Mh^2\) threshold and yields a standard TVBM estimate over finite time, as in classical TVB SSP-RKDG schemes.
\end{remark}

\subsection{Non-activation in smooth regions}
\label{subsec:nonactivation}
Finally, we address the accuracy of the limiting procedure. 
Under a mild
assumption on the predictor stages and a CFL condition, we show that, for sufficiently small \(h\),
the trace limiter remains inactive in smooth monotone regions away from critical
points, and also remains inactive near smooth extrema when the TVB constant
\(M\) is sufficiently large.
Consequently, the limiting procedure preserves the formal order of accuracy of the underlying RKDG or cRKDG schemes.

Recall that \(c_\ell\) denotes the Butcher node for stage \(\ell\), with \(c_\ell=\sum_{j=1}^{\ell-1} a_{\ell j}\).

\begin{theorem}[Non-activation of trace limiter]
\label{thm:nonactivation}
Consider one step of the trace-limited scheme
\eqref{eq:generic-predictor}, \eqref{eq:proposed-trace-limiting}--\eqref{eq:proposed-post-limiter}. 
Denote by $\mathcal N_i:=I_{i-1}\cup I_i\cup I_{i+1}$ a fixed stencil neighborhood of $I_i$. We assume that the following conditions hold:
\begin{enumerate}
\item[\textup{(A1)}]
The exact solution \(u(x,t)\) is smooth for \((x,t)\) in the neighborhood $\mathcal{N}_i$
over the time interval \([t^n,t^{n+1}]\).

\item[\textup{(A2)}]
The CFL number satisfies
\begin{equation}
  |c_\ell|\,\lambda\,\max_u |f'(u)|<\tfrac12,\qquad \ell=1,\ldots,s.
  \label{eq:nonactivation-cfl}
\end{equation}

\item[\textup{(A3)}]
The one-step approximation error of the predictor stages satisfies
\begin{equation}
  \big\|
  u_h^{(\ell)}
  -
  u(\cdot,t^n+c_\ell\Delta t)
  \big\|_{L^\infty(\mathcal{N}_i)}
  \le C_\ell h^2,\qquad \ell=1,\ldots,s,
  \label{eq:stage-accuracy-assumption}
\end{equation}
for constants \(C_\ell\) independent of \(h\).
\end{enumerate}
Then the following hold for all sufficiently small \(h\):
\begin{enumerate}
\item[\textup{(i)}] \emph{(TVD)} If \(|(u_x)_i^n|\ge\kappa\) for some constant
\(\kappa>0\) independent of \(h\), then the TVD trace limiter \((M=0)\) does not
modify the traces of \(u_h^{(\ell)}\) on \(I_i\).

\item[\textup{(ii)}] \emph{(TVB)} There exists a constant \(M_0>0\), depending only
on \(u\) and on the parameters in \textup{(A2)}--\textup{(A3)}, but independent of
\(h\), such that for every \(M\ge M_0\) the TVB trace limiter does not
modify the traces of \(u_h^{(\ell)}\) on \(I_i\).
\end{enumerate}
\end{theorem}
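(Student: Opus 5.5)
We reduce everything to Taylor expansions of the smooth exact solution around $(x_i,t^n)$. We also need the old cell averages to be accurate. Take (A3) with $\ell=1$, where $c_1=0$ and $u_h^{(1)}=u_h^n$. Integrating over each cell then gives $\overline u_{h,j}^n=\overline u_j(t^n)+O(h^2)$ for $j=i-1,i,i+1$. Write $u_x:=(u_x)_i^n$ and $u_{xx}$ for the exact derivatives at $(x_i,t^n)$. We then compute the three arguments of $\widetilde m$ in \eqref{eq:trace-limiter-right} with $q=\overline u_h^n$.

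For the reference differences, Taylor expansion gives
\begin{equation*}
\Delta_\pm\overline u_{h,i}^n = h\,u_x \pm \tfrac{h^2}{2}u_{xx}+O(h^2) = h\,u_x+O(h^2).
\end{equation*}
For the first argument, we use (A3) together with smoothness (A1). The trace satisfies $u_{h,i+1/2}^{(\ell),-}=u(x_{i+1/2},t^n+c_\ell\Delta t)+O(h^2)$. Expanding in time with $\Delta t=\lambda h$ gives
\begin{equation*}
u(x_{i+1/2},t^n+c_\ell\Delta t)=u(x_{i+1/2},t^n)-c_\ell\lambda h\,f'(u)u_x+O(h^2),
\end{equation*}
using $u_t=-f'(u)u_x$. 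Combining with $\overline u_i(t^n)=u(x_i,t^n)+O(h^2)$ yields
\begin{equation*}
a_\ell:=u_{h,i+1/2}^{(\ell),-}-\overline u_{h,i}^n = h\,u_x\bigl(\tfrac12-c_\ell\lambda f'(u)\bigr)+O(h^2).
\end{equation*}
Symmetrically, the left-trace argument is
\begin{equation*}
\overline u_{h,i}^n-u_{h,i-1/2}^{(\ell),+}=h\,u_x\bigl(\tfrac12+c_\ell\lambda f'(u)\bigr)+O(h^2).
\end{equation*}
By (A2), both coefficients $\theta_\ell^\pm:=\tfrac12\mp c_\ell\lambda f'(u)$ lie in $(0,1)$. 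Since the stages, and hence $c_\ell$, are finitely many, these coefficients are bounded away from $0$ and $1$ uniformly by some $\delta>0$.

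\textbf{Part (i).} If $|u_x|\ge\kappa$, all three arguments of $m$ have the sign of $u_x$ once $h$ is small enough that the $O(h^2)$ remainders are below $\delta\kappa h/2$. Moreover $|a_\ell|\le(1-\delta)\kappa h+O(h^2)$, which is strictly less than $\min|\Delta_\pm\overline u_{h,i}^n|\ge \kappa h-O(h^2)$. Hence $m(a_\ell,\cdot,\cdot)=a_\ell$, so the traces are unchanged. The same holds for the left trace. The constants in the $O(h^2)$ terms are uniform because the derivatives of $u$ are bounded on $\mathcal N_i\times[t^n,t^{n+1}]$, with $\lambda$ fixed by (A2).

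\textbf{Part (ii).} This is the main obstacle, since $u_x$ may be small or vanish and then the sign argument of part (i) breaks down. I would split by a threshold $|u_x|\ge\beta h$ versus $|u_x|<\beta h$, rather than fixing $\kappa$.

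When $|u_x|<\beta h$, we get $|a_\ell|\le\tfrac12(1+\text{const})\beta h^2+Ch^2$ directly from the expansion of $a_\ell$. So $|a_\ell|\le Mh^2$ for $M\ge M_0:=C'(\beta+1)$, and the modified minmod returns $a_\ell$ unchanged.

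When $|u_x|\ge\beta h$, we choose $\beta$ large relative to the $O(h^2)$ remainder constants and $\delta$. Explicitly, take $\beta>2C_*/\delta$, where $C_*$ bounds all remainders divided by $h^2$, including the $\tfrac{h^2}{2}u_{xx}$ term. Then every remainder is dominated by $\delta h|u_x|/2$. The sign-and-magnitude argument of part (i) goes through verbatim with $\kappa$ replaced by $|u_x|$, and $m$ returns its first argument.

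The delicate step is checking that the remainder constants do not depend on $\beta$. Then $\beta$ and subsequently $M_0$ can be fixed in this order, depending only on bounds of derivatives of $u$, $\max|f'|$, $\lambda$, the $c_\ell$, and the $C_\ell$ from (A3).
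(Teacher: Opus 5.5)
Your proposal is correct and follows the paper's proof essentially step by step. It uses the same $O(h^2)$ expansions of $\Delta_\pm\overline u_{h,i}^n$ and of the trace deviations, with coefficients $\tfrac12\mp c_\ell\lambda f'(u_i^n)\in(0,1)$, and the same sign-and-magnitude minmod argument for (i). For (ii), your dichotomy is the paper's argument run in the other direction: the paper assumes $|\delta_{i,+}^{(\ell)}|>Mh^2$ and deduces $|(u_x)_i^n|>Kh$ with $K=2C_\ast/\min\{\theta_{\min},1-\theta_{\max}\}$, which is your $\beta$.

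One small slip in part (i): the bound should read $|a_\ell|\le(1-\delta)|u_x|h+O(h^2)$ and be compared against $|u_x|h-O(h^2)$. Writing $\kappa$ in place of $|u_x|$ is not valid, since $|u_x|$ may exceed $\kappa$.
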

\begin{proof}
We prove the statement for the right trace; the left trace is analogous.  Set $\delta_{i,+}^{(\ell)}
  = u_{h,i+1/2}^{(\ell),-}-\overline u_{h,i}^n.$
Recall the definition of the trace limiter in \eqref{eq:trace-limiter-def}. The trace limiter activates to modify the right trace on $I_i$ exactly 
when $\widetilde m$ fails to return $\delta_{i,+}^{(\ell)}$ upon comparison 
with the cell-average differences $\Delta_\pm \overline{u}_{h,i}^n$.

Let \(v_i^n = v(x_i,t^n)\) with \(v=u,u_t,u_x\). 
Using the stage accuracy assumption
\eqref{eq:stage-accuracy-assumption} for $\ell=1$ and Taylor expansions, $\Delta_\pm \overline{u}_{h,i}^n$ admit the expressions:
\begin{equation}
  \Delta_+\overline u_{h,i}^n
  =
  (u_x)_i^n h+O(h^2)\quad\text{and}\quad
  \Delta_-\overline u_{h,i}^n
  =
  (u_x)_i^n h+O(h^2).
  \label{eq:mean-diff-expansion}
\end{equation}

For $\delta_{i,+}^{(\ell)}$, using the stage accuracy assumption
\eqref{eq:stage-accuracy-assumption}, Taylor expansions, and \(u_t = -f'(u)u_x\)
with \(\Delta t = \lambda h\), we obtain
\begin{equation}
\begin{aligned}
  \delta_{i,+}^{(\ell)}
   =& u(x_{i+1/2},t^n+c_\ell \Delta t) - \frac{1}{h}\int_{I_i} u(x,t^n) \mathrm{d}x + \mathcal{O}(h^2)\\
   =&u_i^n +\tfrac12(u_x)_{i}^{n}h + (u_t)_i^n c_\ell \Delta t - \frac{1}{h}\int_{I_i} \left[u_i^n + (u_x)_i^n (x-x_i)\right] \mathrm{d}x + \mathcal{O}(h^2)\\
  =& \theta_\ell
  (u_x)_i^n h
  +O(h^2), \qquad \text{with }\; \theta_\ell := 
  \tfrac12-c_\ell\lambda f'(u_i^n).
  \end{aligned}
  \label{eq:stage-face-expansion}
\end{equation}
By (A2), the leading coefficient \(\theta_\ell\)
lies strictly between \(0\) and \(1\), uniformly over the stages, i.e. $0<\theta_{\min}\le \theta_\ell\le \theta_{\max}<1$.

\emph{(i)} Suppose \(|(u_x)_i^n|\ge\kappa\) for a constant \(\kappa>0\) independent
of \(h\). Then \((u_x)_i^n h\) is the genuine leading term in
\eqref{eq:mean-diff-expansion}--\eqref{eq:stage-face-expansion}.
Thus, for \(h\) sufficiently small,
\(\delta_{i,+}^{(\ell)}\), \(\Delta_+\overline u_{h,i}^n\), and
\(\Delta_-\overline u_{h,i}^n\) share the sign of \((u_x)_i^n\). With $0<\theta_\ell<1$, \(|\delta_{i,+}^{(\ell)}|\) is smaller than both
\(|\Delta_+\overline u_{h,i}^n|\) and \(|\Delta_-\overline u_{h,i}^n|\). 
The minmod function therefore returns its first argument \(\delta_{i,+}^{(\ell)}\).

\emph{(ii)} Consider any cell \(I_i\) in the smooth region. Let the \(O(h^2)\) remainders in
\eqref{eq:mean-diff-expansion}--\eqref{eq:stage-face-expansion} be bounded by
\(C_\ast h^2\), uniformly over the stages. 
Choose $M_0=\theta_{\max}K+C_\ast$ with $K={2C_\ast}/{\min\{\theta_{\min},1-\theta_{\max}\}}$. 
We will show that any \(M\ge M_0\) is sufficient.

Indeed, if \(|\delta_{i,+}^{(\ell)}|\le Mh^2\), then \(\widetilde m\) returns
\(\delta_{i,+}^{(\ell)}\) directly by \eqref{eq:tvb-minmod}, and the right trace
is unchanged. Otherwise, \(|\delta_{i,+}^{(\ell)}|>Mh^2\). From
\eqref{eq:stage-face-expansion}, we have $|\delta_{i,+}^{(\ell)}|
  \le
  \theta_{\max}|(u_x)_i^n|h+C_\ast h^2.$
Thus $|(u_x)_i^n|
  >
  ({(M-C_\ast)}/{\theta_{\max}})h
  \ge Kh.$
The definition of \(K\) then implies
$\theta_{\min}\,|(u_x)_i^n|\,h>C_\ast h^2$ and $|(u_x)_i^n|\,h>C_\ast h^2$.
Therefore, the leading terms proportional to \((u_x)_i^n h\) dominate the
\(O(h^2)\) remainders in \(\delta_{i,+}^{(\ell)}\),
\(\Delta_+\overline u_{h,i}^n\), and
\(\Delta_-\overline u_{h,i}^n\). Hence all three quantities have the same sign as $(u_x)_i^n$.
Moreover,
\begin{equation}
  |\Delta_\pm\overline u_{h,i}^n|-|\delta_{i,+}^{(\ell)}|
  \ \ge\
  (1-\theta_{\max})|(u_x)_i^n|\,h-2C_\ast h^2
  \ >\ 0.
\end{equation}
The minmod function therefore returns its first argument \(\delta_{i,+}^{(\ell)}\).

The left trace is treated identically, with \(\theta_\ell\) replaced by
\(1/2+c_\ell\lambda f'(u_i^n)\). 
\end{proof}

Theorem~\ref{thm:nonactivation} should be read as an accuracy-preservation
statement for the limiter, not as a standalone convergence theorem for the
full RKDG or cRKDG method. It says that if the predictor stages already have
the expected local accuracy in smooth regions, then the trace limiter is
inactive there for sufficiently fine meshes. The post-step polynomial limiter
\(\Lambda\) satisfies the analogous standard TVB non-activation property when
applied to the smooth candidate \(w_h^{n+1}\) \cite{rkdg2}.

\begin{remark}[{CFL constraint}]\label{rem:accuracy-consequence}
Assumption~(A2) is mild. It imposes a CFL restriction depending on
$\max_\ell |c_\ell|$, the largest absolute Butcher node. 
This restriction is 
typically much less stringent than the CFL constraints required for linear stability. 
Indeed, for the second-order midpoint, 
third-order Heun \eqref{eq:crkdg3-limited}, and classical fourth-order 
\eqref{eq:crkdg4-limited} schemes, we have $\max_\ell |c_\ell| = 1/2, 2/3, 1$, respectively. Writing $\mathrm{CFL} = \lambda \max_u |f'(u)|$, 
the CFL limits implied by \eqref{eq:nonactivation-cfl} are $1$, $3/4$, and $1/2$, 
respectively. 
In comparison, the corresponding linear stability constraints are are $0.333$, $0.209$, and $0.145$ for RKDG schemes, and $0.333$, $0.178$, and $0.103$ for 
cRKDG schemes, respectively, when pairing the time integrator with polynomials of one degree lower. 
Moreover, for any consistent numerical flux, one can show that $\lambda \max_u |f'(u)|\leq 1/2$ is a necessary condition for the CFL bound $1/(2(L_1+L_2))$ in Lemma \ref{lem:trace-harten}.
Therefore, when
$\max_\ell |c_\ell|\leq 1$, satisfying the CFL condition in
Lemma~\ref{lem:trace-harten} almost implies \eqref{eq:nonactivation-cfl}.
\end{remark}

\begin{remark}[{Accuracy assumption}]
Assumption~(A3) is mild. It requires the one-step local error to be second-order accurate, while effectively prescribing only first-order global temporal accuracy. Such an assumption can be readily verified in the context of finite difference approximations. It is also consistent with the \(L^2\) error estimate if one follows the derivation in \cite{ai20222} when $k\geq 2$.
\end{remark}

\section{Extensions to Systems and Multi-Dimensions}
\label{sec:extensions}

While previous sections focused on 1D scalar conservation laws, we now extend the method to systems and multidimensional problems. The time discretization and trace-limited corrector structure remain unchanged; what changes is only the application of the trace limiter $\Lambda_{\mathrm{tr}}$ to vector-valued or multidimensional DG polynomials. While our focus here is on the 2D case, this extension applies naturally to higher dimensions.

\subsection{1D systems}
\label{subsec:oned-systems}

Consider the 1D system
\begin{equation}
  \mathbf u_t+\mathbf f(\mathbf u)_x=0,
  \qquad
  \mathbf u\in\mathbb R^m.
  \label{eq:system-1d}
\end{equation}
The trace-limited RKDG/cRKDG scheme has the same form as that in
Section~\ref{subsec:proposed-scheme}, with all scalar quantities replaced by
vectors:
\begin{equation}
\label{eq:system-scheme-1d}
\begin{aligned}
  \mathbf u_h^{(\ell)}
  =&
  \mathbf u_h^n
  +\Delta t\sum_{j=1}^{\ell-1}a_{\ell j}\mathcal H(\mathbf u_h^{(j)}),\qquad \ell=1,\ldots,s,
  \qquad
  \mathcal H\in\{\mathcal F,\mathcal G\},\\
  \widetilde{\mathbf u}_h^{(\ell)}
  =&
  \Lambda_{\mathrm{tr}}
  \left(\mathbf u_h^{(\ell)};\overline{\mathbf u}_h^n\right),
  \qquad
  \ell=1,\ldots,s.\\
  \mathbf w_h^{n+1}
  =&
  \mathbf u_h^n
  +\Delta t\sum_{\ell=1}^s b_\ell
  \mathcal F(\mathbf u_h^{(\ell)};\widetilde{\mathbf u}_h^{(\ell)}),\\
  \mathbf u_h^{n+1}=&\Lambda(\mathbf w_h^{n+1}).
\end{aligned}
\end{equation}

We now define the trace limiter $\Lambda_{\mathrm{tr}}:[\mathbb{V}_h^k]^m\times[\mathbb{V}_h^0]^m
\to [\mathbb{T}_h^k]^m$.
Throughout this section, for a given pair
\((\mathbf v,\overline{\mathbf q})\in [\mathbb{V}_h^k]^m\times[\mathbb{V}_h^0]^m\), we write
\begin{equation}
\widetilde{\mathbf v}
=
\Lambda_{\mathrm{tr}}(\mathbf v;\overline{\mathbf q})
\end{equation}
for the limited trace function. On a single mesh cell \(I_i\), the quantities
\(\mathbf v_{i+1/2}^-\) and \(\mathbf v_{i-1/2}^+\) denote the raw traces to be limited, while
\(\overline{\mathbf q}_i\) denotes the reference cell average used by the limiter.
Define the trace deviations from the reference average by
\begin{equation}
  \boldsymbol \delta_{i,+}
  =
  \mathbf v_{i+1/2}^{-}-\overline{\mathbf q}_i,
  \qquad
  \boldsymbol \delta_{i,-}
  =
  \overline{\mathbf q}_i-\mathbf v_{i-1/2}^{+}.
  \label{eq:system-raw-deviations-1d}
\end{equation}
The neighboring mean differences are defined by 
\begin{equation}
    \Delta_+ \overline{\mathbf q}_{i}= \overline{\mathbf q}_{i+1}-\overline{\mathbf q}_{i}, 
  \qquad 
  \Delta_- \overline{\mathbf q}_{i}= \overline{\mathbf q}_{i}-\overline{\mathbf q}_{i-1}.
\end{equation}

For systems, the limiting is performed in local characteristic variables. 
We need to use the eigen-decomposition $
  A_i =\mathbf f'(\overline{\mathbf q}_i)
  =
  R_i D_i R_i^{-1}$,
where \(D_i\) is a diagonal matrix of eigenvalues and the columns of \(R_i\) are the corresponding right eigenvectors.
These quantities are transformed into
characteristic variables:
\begin{equation}
\begin{aligned}
  \boldsymbol\alpha_{i,+}
  =R_i^{-1}\boldsymbol \delta_{i,+},
  \quad 
  \boldsymbol\alpha_{i,-}
  =R_i^{-1}\boldsymbol \delta_{i,-}, \quad 
  \boldsymbol\alpha_{i}^{+}
  =R_i^{-1}\Delta_+ \overline{\mathbf q}_{i}, \quad 
  \boldsymbol\alpha_{i}^{-}
  =R_i^{-1}\Delta_-\overline{\mathbf q}_{i}.
\end{aligned}
\label{eq:system-char-differences-1d}
\end{equation}
The scalar TVB-modified minmod function is then applied componentwise:
\begin{equation}
\begin{aligned}
  \widetilde{\boldsymbol\alpha}_{i,+}
  &=
  \widetilde m
  \left(
  \boldsymbol\alpha_{i,+},
  \boldsymbol\alpha_{i}^{+},
  \boldsymbol\alpha_{i}^{-};\mathbf{M},h_i
  \right),\qquad 
  \widetilde{\boldsymbol\alpha}_{i,-}
  =
  \widetilde m
  \left(
  \boldsymbol\alpha_{i,-},
  \boldsymbol\alpha_{i}^{+},
  \boldsymbol\alpha_{i}^{-};\mathbf{M},h_i
  \right).
\end{aligned}
\label{eq:system-char-minmod-1d}
\end{equation}
The TVB parameter $\mathbf{M}\in\mathbb{R}^{m}$ may also be chosen componentwise in characteristic variables.
The limited traces are defined by transforming back the modified values to  conservative variables and adding them to the cell averages:
\begin{equation}
  \widetilde{\mathbf v}_{i+1/2}^{-}
  =
  \overline{\mathbf q}_i+R_i\widetilde{\boldsymbol\alpha}_{i,+},
  \qquad
  \widetilde{\mathbf v}_{i-1/2}^{+}
  =
  \overline{\mathbf q}_i-R_i\widetilde{\boldsymbol\alpha}_{i,-}.
  \label{eq:system-limited-traces-1d}
\end{equation}
This completes the definition of trace limiting $\widetilde{\mathbf v} = \Lambda_\mathrm{tr}(\mathbf v, \overline{\mathbf q})$.

The polynomial limiter \(\Lambda\) for systems is defined in the standard way. 
When limiting a polynomial \(\mathbf v\), the reference average in the trace limiter is set as its own cell average, i.e. \(\overline{\mathbf q}=\overline{\mathbf v}\).
A cell \(I_i\) is called troubled if at least one of its traces is modified by
\(\Lambda_{\mathrm{tr}}(\mathbf v;\overline{\mathbf v})\). 
On nontroubled cells, \(\Lambda\) leaves the polynomial \(\mathbf v|_{I_i}\) unchanged. 
On a troubled cell \(I_i\), \(\Lambda\) replaces \(\mathbf{v}|_{I_i}\) with a polynomial in \([\mathbb{P}^{\min(k,2)}(I_i)]^m\) that preserves the cell average \(\overline{\mathbf{v}}_i\) and matches the limited interface traces \(\widetilde{\mathbf{v}}_{i+1/2}^-\) and \(\widetilde{\mathbf{v}}_{i-1/2}^+\).

\subsection{2D scalar equations}
\label{subsec:twod-scalar}
We next consider a scalar conservation law in 2D,
\begin{equation}
  u_t+f(u)_x+g(u)_y=0,\qquad (x,y)\in\Omega,\quad t>0 .
  \label{eq:scalar-2d}
\end{equation}
Let \(\mathcal T_h\) be a Cartesian mesh with cells
\(K_{ij}=I_i\times J_j=[x_{i-1/2},x_{i+1/2}]
\times[y_{j-1/2},y_{j+1/2}]\). We set
\(h_i^x=x_{i+1/2}-x_{i-1/2}\),
\(h_j^y=y_{j+1/2}-y_{j-1/2}\), and introduce the reference coordinates
\(\xi=2(x-x_i)/h_i^x\), \(\eta=2(y-y_j)/h_j^y\), with
\((\xi,\eta)\in[-1,1]^2\), on each cell \(K_{ij}\).
The DG space is
\(\mathbb V_h^k=\{v\in L^2(\Omega):v|_{K_{ij}}\in \mathbb Z^k(K_{ij})
\ \text{for all } i,j\}\), where \(\mathbb Z^k(K_{ij})\) denotes either the
total-degree space \(\mathbb P^k(K_{ij})\) or the tensor-product space
\(\mathbb Q^k(K_{ij})\).

We now define the trace limiter
\(\Lambda_{\mathrm{tr}}:\mathbb V_h^k\times\mathbb V_h^0\to\mathbb T_h^k\), where
\(\mathbb T_h^k=\{v|_{\partial\mathcal T_h}:v\in\mathbb V_h^k\}\) and
\(\partial\mathcal T_h=\{\partial K:K\in\mathcal T_h\}\). 
For a given pair \((v,\overline q)\in \mathbb V_h^k\times \mathbb V_h^0\), we write
\(\widetilde v=\Lambda_{\mathrm{tr}}(v;\overline q)\) for the limited trace function.
Let the affine part (the \(\mathbb P^1\)-component) of \(v\) on \(K_{ij}\) be
\begin{equation}
  \Pi_1 v|_{K_{ij}}
  =
  \overline v_{ij}
  +
  v^x_{ij}\xi
  +
  v^y_{ij}\eta .
  \label{eq:P1-projection-2d}
\end{equation}
The edge-average deviations of this affine part relative to
\(\overline q_{ij}\) are
\begin{equation}
\delta_R=(\overline v_{ij}+v^x_{ij})-\overline q_{ij},\;
  \delta_L=\overline q_{ij}-(\overline v_{ij}-v^x_{ij}),\;
  \delta_T=(\overline v_{ij}+v^y_{ij})-\overline q_{ij},\;
  \delta_B=\overline q_{ij}-(\overline v_{ij}-v^y_{ij}),
\end{equation}
corresponding to the right, left, top, and bottom edges $\partial K_{ij}^{E}, E\in\{R,L,T,B\}$, respectively.
The limited deviations are then defined by
\begin{subequations}
\label{eq:2D-delta}
\begin{align}
  \widetilde\delta_E
  &=
  \widetilde m
  \left(
  \delta_E,
  \overline q_{i+1,j}-\overline q_{ij},
  \overline q_{ij}-\overline q_{i-1,j};
  M,h_i^x
  \right),
  \qquad E\in\{R,L\},
  \label{eq:2D-deltaRL}
  \\
  \widetilde\delta_E
  &=
  \widetilde m
  \left(
  \delta_E,
  \overline q_{i,j+1}-\overline q_{ij},
  \overline q_{ij}-\overline q_{i,j-1};
  M,h_j^y
  \right),
  \qquad E\in\{T,B\}.
  \label{eq:2D-deltaTB}
\end{align}
\end{subequations}

If none of the four deviations is modified, then
\(\Lambda_{\mathrm{tr}}(v;\overline q)\) returns the original inner traces of \(v\) on
\(\partial K_{ij}\). 
Otherwise, construct the limited affine function
\begin{equation}
  \widetilde v_{ij}(\xi,\eta)
  =
  \overline q_{ij}
  +
  \frac12(\widetilde\delta_R+\widetilde\delta_L)\xi
  +
  \frac12(\widetilde\delta_T+\widetilde\delta_B)\eta .
  \label{eq:modaffine}
\end{equation}
For cells with active limiting, the trace limiter returns four inner edge traces of \eqref{eq:modaffine}:
\begin{equation}
  \Lambda_{\mathrm{tr}}(v;\overline q)|_{\partial K_{ij}^{E}}
  =
  \widetilde v_{ij}|_{\partial K_{ij}^{E}},
  \qquad E\in\{R,L,T,B\}.
  \label{eq:Lambda-trace-2d-scalar}
\end{equation}

The polynomial limiter \(\Lambda:\mathbb V_h^k\to\mathbb V_h^k\) is the classical Cockburn--Shu limiter in multiple dimensions on Cartesian meshes \cite{rkdg5}. In the above construction, it corresponds to taking \(\overline q=\overline v\). Thus, on a cell \(K_{ij}\), \((\Lambda v)|_{K_{ij}}=v|_{K_{ij}}\) if the limiting in \eqref{eq:2D-delta} is inactive, while \((\Lambda v)|_{K_{ij}}=\widetilde v_{ij}\) otherwise, where \(\widetilde v_{ij}\) is defined by \eqref{eq:modaffine} with \(\overline q_{ij}=\overline v_{ij}\).

The next proposition shows that the TVB-modified minmod limiter is idempotent on active cells; re-applying it to the resulting edge averages leaves them unchanged.
\begin{proposition}[Edge-average TVB property on active cells]
\label{prop:2d-edge-average-tvb}
Suppose that at least one of the four edge-average deviations
\(\delta_R,\delta_L,\delta_T,\delta_B\) is modified, so that
\(\Lambda_{\mathrm{tr}}(v;\overline{q})\) returns the traces of the
limited affine reconstruction \(\widetilde v_{ij}\). Define the edge-average deviations of \(\widetilde v_{ij}\) against $\overline{q}_{ij}$ by
\begin{equation}\label{eq:2Dedge_deviations}
\begin{aligned}
  e_R&=\frac{1}{h_j^y}\int_{\partial K_{ij}^{R}}\widetilde v_{ij}\,\mathrm{d}y
  -\overline q_{ij},
  &\qquad
  e_L&=
  \overline q_{ij}
  -\frac{1}{h_j^y}\int_{\partial K_{ij}^{L}}\widetilde v_{ij}\,\mathrm{d}y,
  \\
  e_T&=\frac{1}{h_i^x}\int_{\partial K_{ij}^{T}}\widetilde v_{ij}\,\mathrm{d}x
  -\overline q_{ij},
  &\qquad
  e_B&=
  \overline q_{ij}
  -\frac{1}{h_i^x}\int_{\partial K_{ij}^{B}}\widetilde v_{ij}\,\mathrm{d}x.
\end{aligned}
\end{equation}
Then these deviations are fixed points of the corresponding TVB-modified minmod operations with respect to the reference cell averages \(\overline q\), i.e.,
\begin{subequations}
\begin{align}
   e_E
  =
  \widetilde m
  \left(
  e_E,
  \overline q_{i+1,j}-\overline q_{ij},
  \overline q_{ij}-\overline q_{i-1,j};
  M,h_i^x
  \right),
  \qquad E\in\{R,L\},\label{eq:eReL}\\
  e_E
  =
  \widetilde m
  \left(
  e_E,
  \overline q_{i,j+1}-\overline q_{ij},
  \overline q_{ij}-\overline q_{i,j-1};
  M,h_j^y
  \right),
  \qquad E\in\{T,B\}.\label{eq:eTeB}
\end{align}
\end{subequations}
\end{proposition}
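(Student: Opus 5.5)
The plan is a direct computation of the edge averages of $\widetilde v_{ij}$, followed by a short case analysis on the definition \eqref{eq:tvb-minmod} of $\widetilde m$.

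\textbf{Step 1: compute the edge deviations.} The reconstruction \eqref{eq:modaffine} is affine in $(\xi,\eta)$. On the right edge $\xi=1$, the $\eta$-term averages to zero over $\eta\in[-1,1]$. Hence
\[
e_R=\tfrac12(\widetilde\delta_R+\widetilde\delta_L),\qquad e_L=\overline q_{ij}-\bigl(\overline q_{ij}-\tfrac12(\widetilde\delta_R+\widetilde\delta_L)\bigr)=\tfrac12(\widetilde\delta_R+\widetilde\delta_L).
\]
In the same way, $e_T=e_B=\tfrac12(\widetilde\delta_T+\widetilde\delta_B)$. So it suffices to show the following claim. Let $a=\Delta_+$, $c=\Delta_-$ be the relevant neighbour differences, and let $\widetilde\delta_1,\widetilde\delta_2$ each be outputs of $\widetilde m(\cdot,a,c;M,h)$ with the same $a$, $c$ and the same $h$. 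Then $e=\tfrac12(\widetilde\delta_1+\widetilde\delta_2)$ satisfies $\widetilde m(e,a,c;M,h)=e$. The pair (R,L) uses $h_i^x$ and (T,B) uses $h_j^y$, as in \eqref{eq:2D-delta}, so the claim covers both \eqref{eq:eReL} and \eqref{eq:eTeB}.

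\textbf{Step 2: classify each output.} By \eqref{eq:tvb-minmod}, each $\widetilde\delta_p$ is of one of two types. Either it is a raw value with $|\widetilde\delta_p|\le Mh^2$, or it equals $m(\delta_p,a,c)$. In the second case, either $\widetilde\delta_p=0$, or $a$ and $c$ share a sign $s$ and $\widetilde\delta_p\in s\,[0,\mu]$ with $\mu=\min\{|a|,|c|\}$.

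\textbf{Step 3: case analysis.}
\begin{itemize}
\item If $|e|\le Mh^2$, then $\widetilde m$ returns $e$ by definition, and we are done.
\item Otherwise $|e|>Mh^2$. Then some $\widetilde\delta_p$ satisfies $|\widetilde\delta_p|>Mh^2$, so it is a nonzero minmod output. This forces $\operatorname{sgn}(a)=\operatorname{sgn}(c)=s$ and $\widetilde\delta_p\in s\,(0,\mu]$.
\item If the other output is also a minmod output, it lies in $s\,[0,\mu]$. The average of two points of the convex set $s\,[0,\mu]$ stays in it, so $m(e,a,c)=e$.
\item If the other output is raw, write it as $b$ with $|b|\le Mh^2$. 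A sign of $e$ opposite to $s$ would give $|e|\le |b|/2\le Mh^2/2$, a contradiction. So $\operatorname{sgn}(e)=s$. Moreover $|e|\le(\mu+Mh^2)/2<(\mu+|e|)/2$, hence $|e|<\mu$. Again $m(e,a,c)=e$, and therefore $\widetilde m(e,a,c;M,h)=e$.
\end{itemize}

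\textbf{Expected obstacle.} The only delicate point is this mixed case, where one deviation is accepted through the TVB threshold with a possibly wrong sign and the other is genuinely limited. There, the strict inequality $|e|>Mh^2$ must be used both to fix the sign of $e$ and to bound $|e|$ by $\mu$, as done above. Finally, the active-cell hypothesis only guarantees that $\Lambda_{\mathrm{tr}}$ indeed returns the traces of $\widetilde v_{ij}$. The argument itself holds regardless of which deviations were modified.
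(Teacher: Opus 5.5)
Your proof is correct, and its first step matches the paper's. Both reduce the claim to $e_R=e_L=\tfrac12(\widetilde\delta_R+\widetilde\delta_L)$ (and likewise $e_T=e_B$). That is, both need to show that the average of two outputs of $\widetilde m(\cdot,D_+,D_-;M,h)$ with common $D_\pm$ is again a fixed point.

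The second step differs. The paper argues structurally. It writes down the fixed-point set of $a\mapsto\widetilde m(a,D_+,D_-;M,h)$ explicitly, with $K=Mh^2$: it is $[-K,\max\{K,\min(D_+,D_-)\}]$, $[\min\{-K,\max(D_+,D_-)\},K]$, or $[-K,K]$, according to the signs of $D_\pm$. It observes that this set is an interval and that $\widetilde\delta_R,\widetilde\delta_L$ lie in it, since outputs of $\widetilde m$ are fixed points. It then concludes by convexity. You instead verify the fixed-point identity for $e$ directly, splitting on whether $|e|\le Mh^2$ and on which branch of $\widetilde m$ produced each output.

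The paper's route is shorter and extends verbatim to any convex combination of any number of limited deviations. However, it takes the idempotence of $\widetilde m$ for granted (``by construction''). Your route proves exactly what is needed without identifying the fixed-point set. It also isolates the only nontrivial configuration: a TVB-accepted raw deviation of possibly wrong sign averaged with a genuinely limited one. In the paper this case is absorbed into the observation that $[-K,K]\cup s\,[0,\mu]$ forms a single interval.
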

\begin{proof}
We only prove \eqref{eq:eReL}; the proof of \eqref{eq:eTeB} is the same. 
Set
\begin{equation}
D_+^x=\overline q_{i+1,j}-\overline q_{ij},\qquad
  D_-^x=\overline q_{ij}-\overline q_{i-1,j},\qquad
  K=M(h_i^x)^2.
\end{equation}
We first note that the fixed-point set of the map
\(a\mapsto \widetilde m(a,D_+^x,D_-^x;M,h_i^x)\) is an interval. Indeed, if
\(D_+^x\) and \(D_-^x\) are both positive, this set is
\(
  [-K,\max\{K,\min(D_+^x,D_-^x)\}].
\)
If \(D_+^x\) and \(D_-^x\) are both negative, this set is
\(
  [\min\{-K,\max(D_+^x,D_-^x)\},K].
\)
If their signs differ, the fixed-point set is simply \([-K,K]\).

By construction, \(\widetilde\delta_R\) and \(\widetilde\delta_L\) are fixed points of this map. Since the fixed-point set is an interval, their average also belongs to this interval and is thus a fixed point. Moreover, by \eqref{eq:modaffine} and \eqref{eq:2Dedge_deviations}, we have 
$e_R=e_L=(\widetilde\delta_R+\widetilde\delta_L)/2$.
Therefore \(e_R\) and \(e_L\) satisfy \eqref{eq:eReL}.
\end{proof}

\subsection{2D systems}
\label{subsec:twod-systems}

For a 2D system
\begin{equation}
  \mathbf u_t+\mathbf f(\mathbf u)_x+\mathbf g(\mathbf u)_y=0,
  \qquad
  \mathbf u\in\mathbb R^m,
  \label{eq:system-2d}
\end{equation}
the limiters are applied in local characteristic variables, separately in the
two coordinate directions. The construction combines the characteristic
limiting in Section~\ref{subsec:oned-systems} with the edge-average affine
reconstruction in Section~\ref{subsec:twod-scalar}.

We now define the trace limiter $\Lambda_{\mathrm{tr}}:[\mathbb V_h^k]^m\times[\mathbb V_h^0]^m
  \to[\mathbb T_h^k]^m.$
For \(\mathbf v\in[\mathbb V_h^k]^m\) and
\(\overline{\mathbf q}\in[\mathbb V_h^0]^m\), write
\(\widetilde{\mathbf v}=\Lambda_{\mathrm{tr}}(\mathbf v;\overline{\mathbf q})\).
On a cell \(K_{ij}\), consider the eigen-decompositions
\begin{equation}
A^x_{ij}
  =
  \mathbf f'(\overline{\mathbf q}_{ij})
  =
  R^x_{ij}D^x_{ij}(R^x_{ij})^{-1},
  \qquad
  A^y_{ij}
  =
  \mathbf g'(\overline{\mathbf q}_{ij})
  =
  R^y_{ij}D^y_{ij}(R^y_{ij})^{-1},
\end{equation}
where the columns of \(R^x_{ij}\) and \(R^y_{ij}\) are the corresponding right
eigenvectors. Write the affine part of \(\mathbf v\) on \(K_{ij}\) as
\begin{equation}
  \Pi_1\mathbf v|_{K_{ij}}
  =
  \overline{\mathbf v}_{ij}
  +
  \mathbf v^x_{ij}\xi
  +
  \mathbf v^y_{ij}\eta .
  \label{eq:P1-projection-2d-system}
\end{equation}
The edge-average deviations relative to \(\overline{\mathbf q}_{ij}\) are
\begin{equation}    
\begin{aligned}
  \boldsymbol\delta_R&=(\overline{\mathbf v}_{ij}+\mathbf v^x_{ij})
  -\overline{\mathbf q}_{ij},
  &\quad
  \boldsymbol\delta_L&=\overline{\mathbf q}_{ij}
  -(\overline{\mathbf v}_{ij}-\mathbf v^x_{ij}),\\
  \boldsymbol\delta_T&=(\overline{\mathbf v}_{ij}+\mathbf v^y_{ij})
  -\overline{\mathbf q}_{ij},
  &\quad
  \boldsymbol\delta_B&=\overline{\mathbf q}_{ij}
  -(\overline{\mathbf v}_{ij}-\mathbf v^y_{ij}).
\end{aligned}
\end{equation}
The neighboring mean differences in $x$- and $y$-directions are defined by
\begin{equation}
\Delta_\pm^x\overline{\mathbf q}_{ij}
  =
  \pm(\overline{\mathbf q}_{i\pm1,j}-\overline{\mathbf q}_{ij}),
  \qquad
  \Delta_\pm^y\overline{\mathbf q}_{ij}
  =
  \pm(\overline{\mathbf q}_{i,j\pm1}-\overline{\mathbf q}_{ij}).
\end{equation}

For \(d=x,y\), set $\mathcal E_x=\{R,L\}$ and $\mathcal E_y=\{T,B\}$.
For each edge \(E\in\mathcal E_d\), project the corresponding edge deviation and
neighboring mean differences into the characteristic variables of \(A^d_{ij}\):
\begin{equation}
\boldsymbol\alpha_E^d=(R^d_{ij})^{-1}\boldsymbol\delta_E,
  \qquad
  \boldsymbol\alpha_\pm^d=(R^d_{ij})^{-1}
  \Delta_\pm^d\overline{\mathbf q}_{ij}.
\end{equation}
The scalar TVB-modified minmod function is then applied componentwise, and the
result is transformed back to conservative variables:
\begin{equation}
\widetilde{\boldsymbol\delta}_E
  =
  R^d_{ij}\,
  \widetilde m
  \left(
  \boldsymbol\alpha_E^d,
  \boldsymbol\alpha_+^d,
  \boldsymbol\alpha_-^d;
  \mathbf{M},h^d_{i/j}
  \right),
  \quad 
  E\in\mathcal E_d,\quad 
  d=x,y, \quad  h_{i/j}^d = h_i^x\text{ or }
h_j^y.
\end{equation}

If the componentwise characteristic limiting does not modify any of the four
edge deviations, then \(\Lambda_{\mathrm{tr}}(\mathbf v;\overline{\mathbf q})\)
returns the original inner traces of \(\mathbf v\). Otherwise, construct
\begin{equation}
  \widetilde{\mathbf v}_{ij}(\xi,\eta)
  =
  \overline{\mathbf q}_{ij}
  +
  \frac12(\widetilde{\boldsymbol\delta}_R+
  \widetilde{\boldsymbol\delta}_L)\xi
  +
  \frac12(\widetilde{\boldsymbol\delta}_T+
  \widetilde{\boldsymbol\delta}_B)\eta.
  \label{eq:system-2d-affine-reconstruction}
\end{equation}
On an active cell, the trace limiter returns the inner edge traces of
\(\widetilde{\mathbf v}_{ij}\).

The polynomial limiter $\Lambda:[\mathbb V_h^k]^m\to[\mathbb V_h^k]^m$  leaves \(\mathbf v|_{K_{ij}}\) unchanged on nontroubled cells, while it replaces \(\mathbf v|_{K_{ij}}\) by the limited affine reconstruction
\(\widetilde{\mathbf v}_{ij}\) with 
\(\overline{\mathbf q}=\overline{\mathbf v}\) on troubled cells. This retrieves the usual characteristic-wise
Cockburn--Shu limiter for systems on Cartesian meshes \cite{rkdg5}.

\begin{remark}[{Theoretical considerations}]
We view the proposed methods in this section as a natural algorithmic extension of the 1D scalar construction. A rigorous TVDM/TVBM analysis is currently unavailable for high-dimensional nonlinear systems. Notably, this aligns with the theoretical boundaries of the classical TVDM/TVBM framework, as well as the intrinsic analytical difficulties of the underlying PDEs in these complex settings. Nevertheless, properties such as local and global conservation carry over from the 1D case, and the practical effectiveness of these algorithms is demonstrated by numerical tests.
\end{remark}

\section{Numerical Tests}\label{sec:numerical_tests}
We test the proposed trace-limited cRKDG schemes on a suite of scalar conservation laws and the Euler equations, and compare the results with those obtained using classical SSP-RKDG schemes of the same order. {To save space, we present only the results for third- and fourth-order schemes in the accuracy tests, and for third-order schemes in the shock-capturing tests. 
Besides \eqref{eq:ssp-rk3-dg}, \eqref{eq:crkdg3-limited} and \eqref{eq:crkdg4-limited}, we use the Spiteri--Ruuth scheme in \cite{spiteri2002new} for the fourth-order SSP-RKDG method.
All schemes use the local Lax--Friedrichs numerical flux, with the dissipation parameter set to $\alpha=1.05$ times the local wave speed.
We set $M=50$ uniformly in all TVB tests, following the default choice in \cite{rkdg5}. The CFL numbers are set to $0.05$ for the accuracy tests and $0.15$ for all shock-capturing tests.
The orders of spatial accuracy are chosen to be consistent with the corresponding temporal orders.

\subsection{Scalar conservation laws}\label{sec:scalar_tests}

\subsubsection{Linear advection}\label{sec:A1}
We consider the linear advection equation $u_t + u_x = 0$ on the periodic domain $[0,2\pi]$.

\paragraph{Accuracy test}
We take the smooth initial data
$u(x,0) = \sin(x)$, so that the exact solution is $u(x,t) = \sin(x-t)$.
The final time is $T=1$.
Table~\ref{tab:linear_acc_combined_L2}
reports the $L^2$ errors and convergence orders for the schemes under the TVD ($M=0$) and TVB ($M=50$) settings, respectively.
From the table, we observe optimal accuracy with TVB limiting and order degeneration in the TVD setting for all schemes.

\begin{table}[h!]
\centering\small
\caption{Accuracy tests for 1D linear advection.}\label{tab:linear_acc_combined_L2}
\begin{tabular}{r rr rr rr rr}
\toprule
  $N$ & \multicolumn{2}{c}{$\mathbb{P}^2$ cRK3} & \multicolumn{2}{c}{$\mathbb{P}^3$ cRK4} & \multicolumn{2}{c}{$\mathbb{P}^2$ SSP-RK3} & \multicolumn{2}{c}{$\mathbb{P}^3$ SSP-RK4} \\
\cmidrule(lr){2-9}
  & $L^2$-Error & Order & $L^2$-Error & Order & $L^2$-Error & Order & $L^2$-Error & Order \\
\midrule
  \multicolumn{9}{c}{TVD ($M=0$)} \\
\midrule
     20 & 8.44e-02 & ---  & 4.05e-02 & ---  & 9.06e-02 & ---  & 4.09e-02 & ---  \\
     40 & 2.40e-02 & 1.82 & 1.07e-02 & 1.92 & 2.60e-02 & 1.80 & 1.08e-02 & 1.92 \\
     80 & 6.74e-03 & 1.83 & 2.81e-03 & 1.93 & 7.44e-03 & 1.80 & 2.80e-03 & 1.94 \\
    160 & 1.78e-03 & 1.92 & 7.22e-04 & 1.96 & 1.92e-03 & 1.96 & 7.00e-04 & 2.00 \\
\midrule
  \multicolumn{9}{c}{TVB ($M=50$)} \\
\midrule
     20 & 2.69e-04 & ---  & 5.00e-06 & ---  & 3.12e-04 & ---  & 5.98e-06 & ---  \\
     40 & 3.37e-05 & 3.00 & 3.08e-07 & 4.02 & 3.91e-05 & 3.00 & 3.85e-07 & 3.96 \\
     80 & 4.21e-06 & 3.00 & 1.92e-08 & 4.00 & 4.89e-06 & 3.00 & 2.38e-08 & 4.02 \\
    160 & 5.27e-07 & 3.00 & 1.20e-09 & 4.00 & 6.11e-07 & 3.00 & 1.49e-09 & 3.99 \\
\bottomrule
\end{tabular}
\end{table}
\paragraph{Shock-capturing test}
We solve the same equation with the discontinuous square-wave initial condition $u(x,0) = 1$ for $\pi/2 \le x \le 3\pi/2$, and $u(x,0) = 0$ otherwise, using $N=80$ cells. The final time is
$T=2\pi$, so the exact solution travels one period.
Figures~\ref{fig:linear_disc_tvd} and~\ref{fig:linear_disc_tvb} compare
the cell averages of $\mathbb{P}^2$ cRK3 and $\mathbb{P}^2$ SSP-RK3 solutions against the exact solution
under the TVD and TVB limiters, respectively.
From the figures we can observe almost identical TVDM behaviors of both schemes, and comparable results in the TVB setting. 
 
\begin{figure}[h!]
\centering
\begin{subfigure}[b]{0.45\textwidth}
  \centering
  \includegraphics[width=\textwidth]{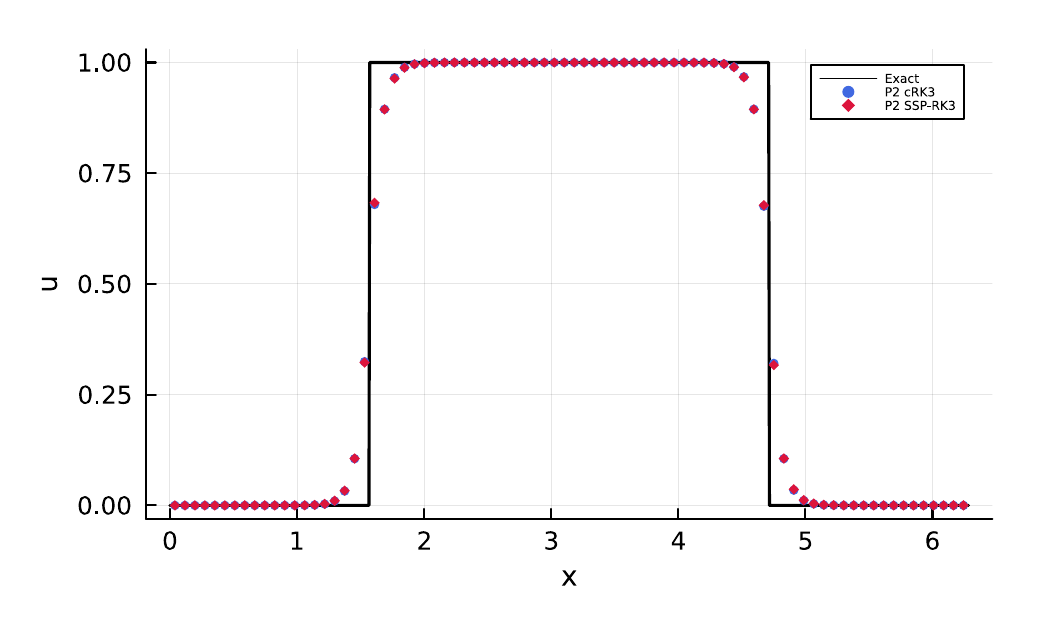}
  \caption{TVD ($M=0$).}
  \label{fig:linear_disc_tvd}
\end{subfigure}
\hfill
\begin{subfigure}[b]{0.45\textwidth}
  \centering
  \includegraphics[width=\textwidth]{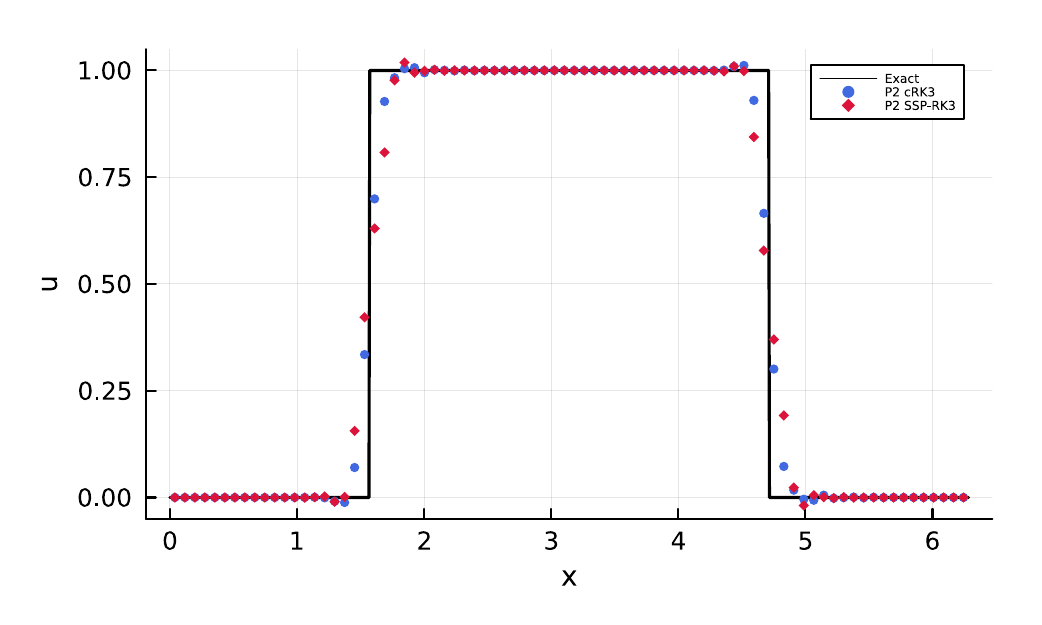}
  \caption{TVB ($M=50$).}
  \label{fig:linear_disc_tvb}
\end{subfigure}
\caption{Linear advection with square-wave initial data.
  $\mathbb{P}^2$ DG, $N=80$, $T=2\pi$.}
\label{fig:linear_disc}
\end{figure}

\subsubsection{Burgers equation}\label{sec:A2}

We consider the Burgers equation 
$u_t + ({u^2}/{2})_x = 0$
on the periodic domain $[0,2\pi]$.

\paragraph{Accuracy test}
We take the smooth initial data
$u(x,0) = {1}/{2}+\sin(x)$.
The exact solution is obtained by the method of characteristics and
Newton iteration.
The final time $T=0.2$ is before shock formation, so the
solution remains smooth throughout the computation.
Table~\ref{tab:burgers_acc_combined_L2}
reports the $L^2$ errors and convergence orders. 
As in the linear case, we observe optimal convergence orders under the TVB limiting and order degeneration under the TVD limiting.

\begin{table}[h!]
\centering\small
\caption{Accuracy tests for 1D Burgers equation.}\label{tab:burgers_acc_combined_L2}
\begin{tabular}{r rr rr rr rr}
\toprule
  $N$ & \multicolumn{2}{c}{$\mathbb{P}^2$ cRK3} & \multicolumn{2}{c}{$\mathbb{P}^3$ cRK4} & \multicolumn{2}{c}{$\mathbb{P}^2$ SSP-RK3} & \multicolumn{2}{c}{$\mathbb{P}^3$ SSP-RK4} \\
\cmidrule(lr){2-9}
  & $L^2$-Error & Order & $L^2$-Error & Order & $L^2$-Error & Order & $L^2$-Error & Order \\
\midrule
  \multicolumn{9}{c}{TVD ($M=0$)} \\
\midrule
     20 & 3.10e-02 & ---  & 1.85e-02 & ---  & 3.22e-02 & ---  & 1.86e-02 & ---  \\
     40 & 8.53e-03 & 1.86 & 4.47e-03 & 2.05 & 9.14e-03 & 1.82 & 4.51e-03 & 2.04 \\
     80 & 2.52e-03 & 1.76 & 1.24e-03 & 1.85 & 2.69e-03 & 1.76 & 1.26e-03 & 1.84 \\
    160 & 6.82e-04 & 1.88 & 3.11e-04 & 1.99 & 7.35e-04 & 1.87 & 3.14e-04 & 2.00 \\
\midrule
  \multicolumn{9}{c}{TVB ($M=50$)} \\
\midrule
     20 & 3.36e-04 & ---  & 1.09e-05 & ---  & 3.55e-04 & ---  & 1.18e-05 & ---  \\
     40 & 4.47e-05 & 2.91 & 7.17e-07 & 3.92 & 4.73e-05 & 2.91 & 7.82e-07 & 3.92 \\
     80 & 5.82e-06 & 2.94 & 4.62e-08 & 3.96 & 6.16e-06 & 2.94 & 5.04e-08 & 3.96 \\
    160 & 7.46e-07 & 2.96 & 2.97e-09 & 3.96 & 7.90e-07 & 2.96 & 3.25e-09 & 3.95 \\
\bottomrule
\end{tabular}
\end{table}
\paragraph{Shock-capturing test}
We solve the problem with the discontinuous square-wave initial condition $u(x,0) = 1$ for $\pi/2 \le x \le 3\pi/2$, and $u(x,0) = -1$ otherwise, using $N=80$ cells and a final time of $T=1$.
At $x=\pi/2$ the jump from $-1$ to $1$ produces a rarefaction
fan, while at $x=3\pi/2$ the jump from $1$ to $-1$ produces a stationary
shock.
This test simultaneously examines the resolution of a rarefaction wave
and the sharpness of a standing shock.
Figures~\ref{fig:burgers_disc_tvd} and~\ref{fig:burgers_disc_tvb}
compare the cell averages of $\mathbb{P}^2$ cRK3 and $\mathbb{P}^2$ SSP-RK3 solutions under the TVD and TVB limiters, respectively.

\begin{figure}[h!]
\centering
\begin{subfigure}[b]{0.45\textwidth}
  \centering
  \includegraphics[width=\textwidth]{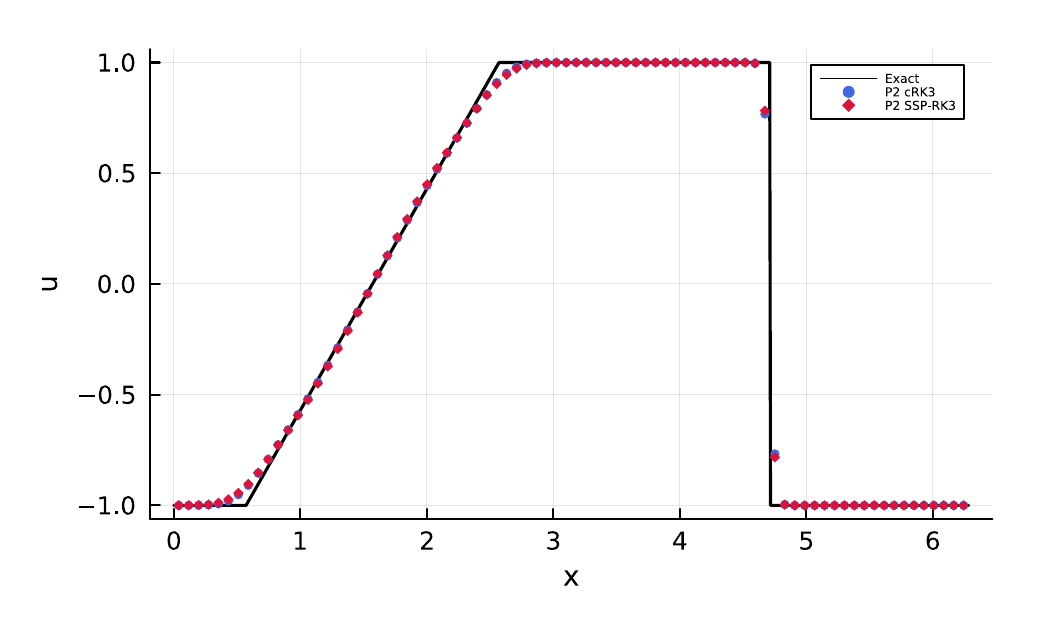}
  \caption{TVD ($M=0$).}
  \label{fig:burgers_disc_tvd}
\end{subfigure}
\hfill
\begin{subfigure}[b]{0.45\textwidth}
  \centering
  \includegraphics[width=\textwidth]{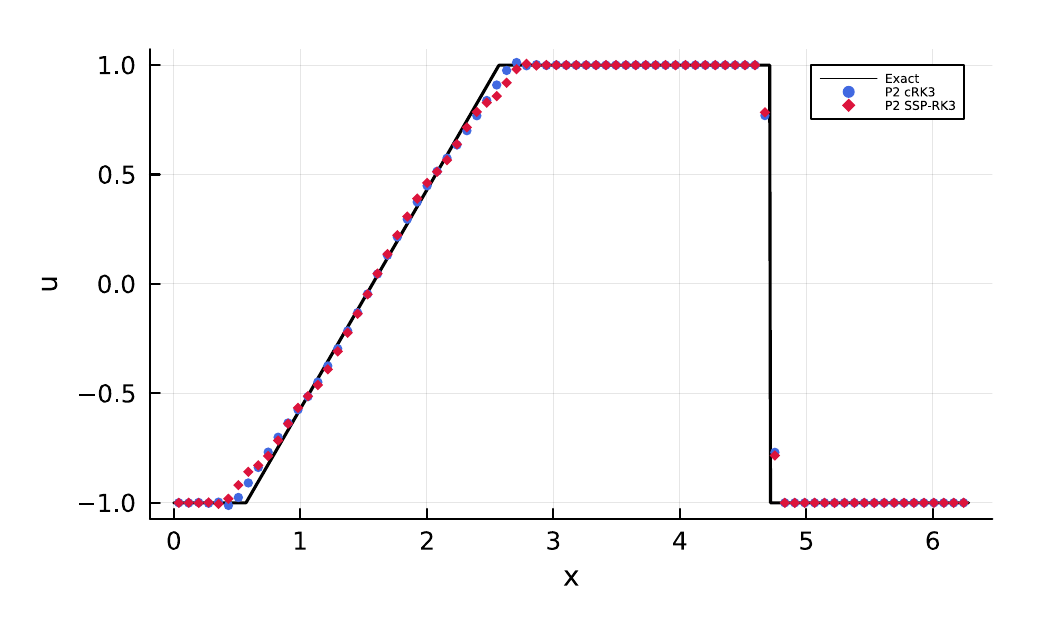}
  \caption{TVB ($M=50$).}
  \label{fig:burgers_disc_tvb}
\end{subfigure}
\caption{Burgers equation with piecewise $\pm 1$ initial data.
  $\mathbb{P}^2$ DG, $N=80$, $T=1$.}
\label{fig:burgers_disc}
\end{figure}

\subsubsection{Buckley--Leverett equation}\label{sec:A3}

We consider the Buckley--Leverett equation $u_t + f(u)_x = 0$, where the flux function is given by $f(u) = 4u^2 / (4u^2 + (1-u)^2)$. 
The flux $f$ is nonconvex in this problem.
We test two Riemann problems.
For both tests, the domain is $[-0.5,\,0.5]$ with $N=80$ cells,
the final time is $T=1$, and inflow boundary conditions are imposed
using the corresponding constant states.

\paragraph{Shock–rarefaction–shock structure}
The initial data are $u(x,0) = 2$ for $x < 0$, and $u(x,0) = -2$ for $x \ge 0$.
The entropy
solution is a composite shock–rarefaction–shock pattern.
Figures~\ref{fig:bl1_tvd} and~\ref{fig:bl1_tvb} show the results.

\begin{figure}[h!]
\centering
\begin{subfigure}[b]{0.45\textwidth}
  \centering
  \includegraphics[width=\textwidth]{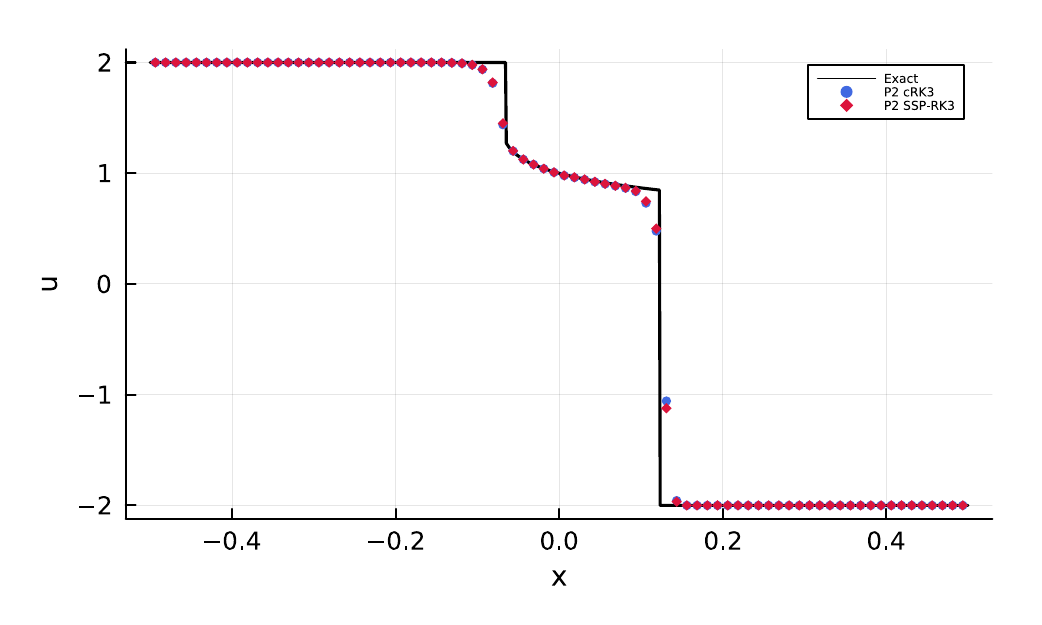}
  \caption{TVD ($M=0$).}
  \label{fig:bl1_tvd}
\end{subfigure}
\hfill
\begin{subfigure}[b]{0.45\textwidth}
  \centering
  \includegraphics[width=\textwidth]{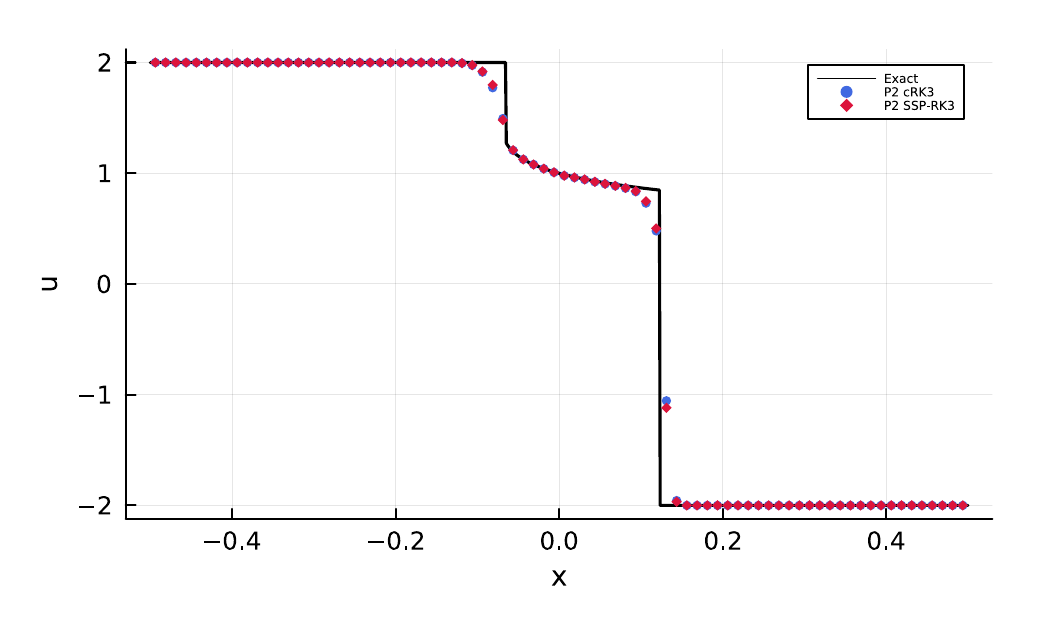}
  \caption{TVB ($M=50$).}
  \label{fig:bl1_tvb}
\end{subfigure}
\caption{Buckley--Leverett, first Riemann problem ($u_L=2$, $u_R=-2$).
  Composite shock--rarefaction--shock structure.
  $\mathbb{P}^2$ DG, $N=80$, $T=1$.}
\label{fig:bl1}
\end{figure}

\paragraph{Two-shock structure}
The initial data are $u(x,0) = -3$ for $x < 0$, and $u(x,0) = 3$ for $x \ge 0$. 
The entropy solution consists of two shocks separated by a constant
intermediate state $u=0$.
Figures~\ref{fig:bl2_tvd} and~\ref{fig:bl2_tvb} show the results.

\begin{figure}[h!]
\centering
\begin{subfigure}[b]{0.45\textwidth}
  \centering
  \includegraphics[width=\textwidth]{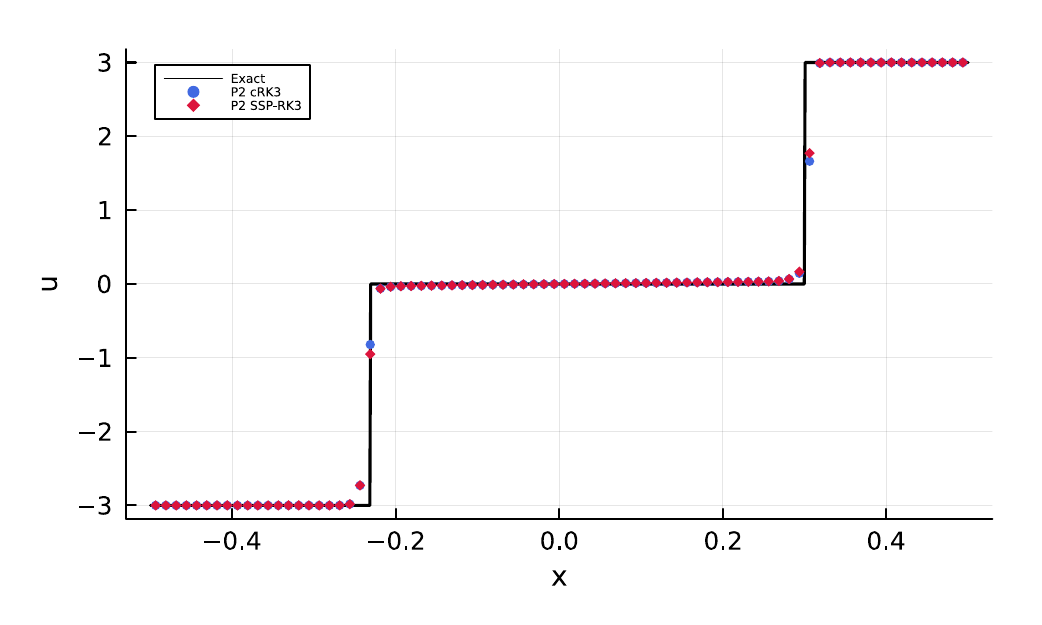}
  \caption{TVD ($M=0$).}
  \label{fig:bl2_tvd}
\end{subfigure}
\hfill
\begin{subfigure}[b]{0.45\textwidth}
  \centering
  \includegraphics[width=\textwidth]{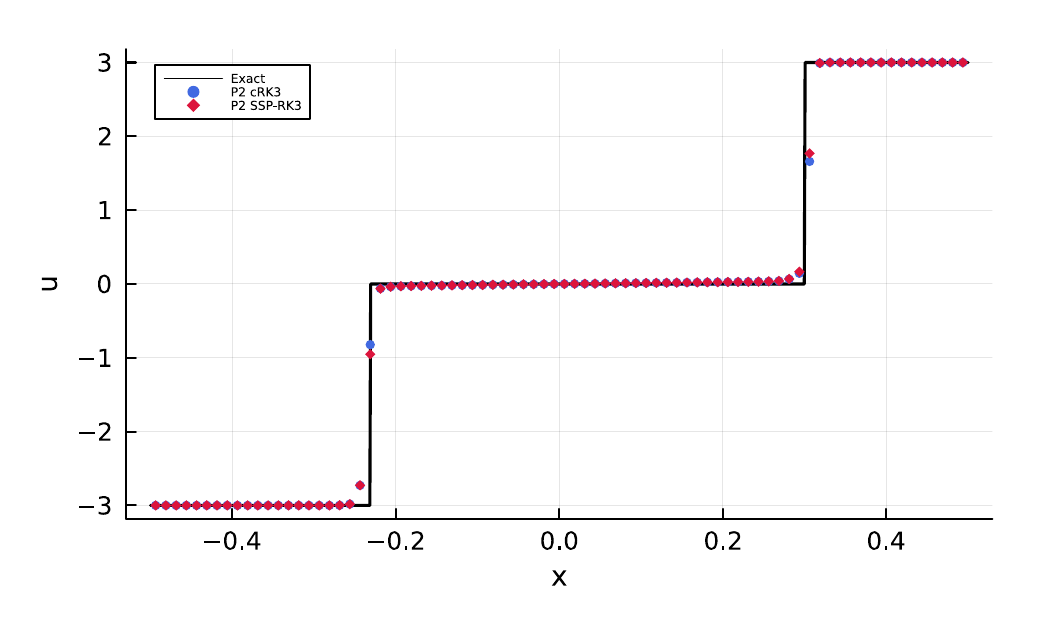}
  \caption{TVB ($M=50$).}
  \label{fig:bl2_tvb}
\end{subfigure}
\caption{Buckley--Leverett, second Riemann problem ($u_L=-3$, $u_R=3$).
  Two-shock structure with intermediate constant state $u=0$.
  $\mathbb{P}^2$ DG, $N=80$, $T=1$.}
\label{fig:bl2}
\end{figure}

\subsubsection{2D Burgers equation}\label{sec:A4}
We consider the 2D Burgers equation $u_t + ({u^2}/{2})_x + ({u^2}/{2})_y = 0$ on the periodic domain $[0,2\pi]^2$.
The initial condition is taken as $u(x,y,0)=1/2+\sin(x+y)$.
The corresponding shock formation time is $T_s=1/2.$

\paragraph{Accuracy test}
The final time $T=0.2$ is chosen before shock formation. Table~\ref{tab:burgers2D_acc_combined_L2} reports the $L^2$ errors and convergence orders. 

\paragraph{Shock-capturing test}
The final time $T=0.7$ is after shock formation. 
Figures \ref{fig:burgers2d_tvd_slice_y} and \ref{fig:burgers2d_tvb_slice_y} show the cell averages of the solutions along the horizontal line $y=0.5$.

\begin{table}[h!]
\centering\small
\caption{Accuracy tests for 2D Burgers equation, $N_x = N_y$.}\label{tab:burgers2D_acc_combined_L2}
\begin{tabular}{r rr rr rr rr}
\toprule
  $N_x$ & \multicolumn{2}{c}{$\mathbb{Q}^2$ cRK3} & \multicolumn{2}{c}{$\mathbb{Q}^3$ cRK4} & \multicolumn{2}{c}{$\mathbb{Q}^2$ SSP-RK3} & \multicolumn{2}{c}{$\mathbb{Q}^3$ SSP-RK4} \\
\cmidrule(lr){2-9}
  & $L^2$-Error & Order & $L^2$-Error & Order & $L^2$-Error & Order & $L^2$-Error & Order \\
\midrule
  \multicolumn{9}{c}{TVD ($M=0$)} \\
\midrule
     20 & 1.05e-01 & ---  & 1.11e-01 & ---  & 1.08e-01 & ---  & 1.15e-01 & ---  \\
     40 & 2.42e-02 & 2.12 & 2.49e-02 & 2.16 & 2.47e-02 & 2.12 & 2.56e-02 & 2.17 \\
     80 & 5.54e-03 & 2.13 & 5.56e-03 & 2.16 & 5.80e-03 & 2.09 & 5.69e-03 & 2.17 \\
    160 & 1.48e-03 & 1.90 & 1.39e-03 & 2.00 & 1.52e-03 & 1.93 & 1.44e-03 & 1.98 \\
\midrule
  \multicolumn{9}{c}{TVB ($M=50$)} \\
\midrule
     20 & 2.58e-03 & ---  & 1.88e-04 & ---  & 2.63e-03 & ---  & 1.94e-04 & ---  \\
     40 & 3.52e-04 & 2.88 & 1.28e-05 & 3.88 & 3.61e-04 & 2.87 & 1.33e-05 & 3.87 \\
     80 & 4.78e-05 & 2.88 & 8.42e-07 & 3.92 & 4.92e-05 & 2.87 & 8.82e-07 & 3.91 \\
    160 & 6.38e-06 & 2.91 & 5.51e-08 & 3.93 & 6.57e-06 & 2.91 & 5.78e-08 & 3.93 \\
\bottomrule
\end{tabular}
\end{table}

\begin{figure}[h!]
\centering
\begin{subfigure}[b]{0.45\textwidth}
  \centering
  \includegraphics[width=\textwidth]{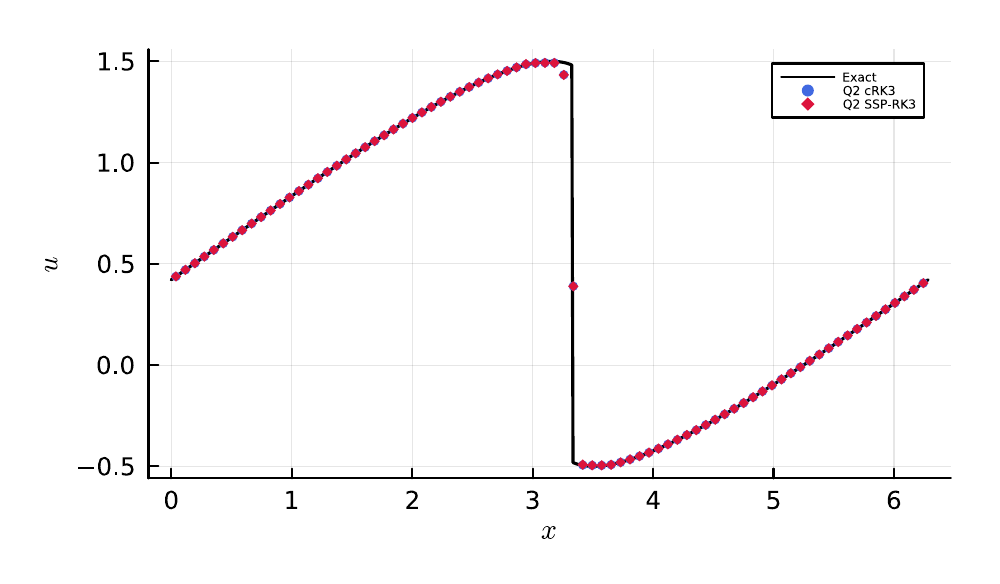}
  \caption{TVD ($M=0$).}
  \label{fig:burgers2d_tvd_slice_y}
\end{subfigure}
\hfill
\begin{subfigure}[b]{0.45\textwidth}
  \centering
  \includegraphics[width=\textwidth]{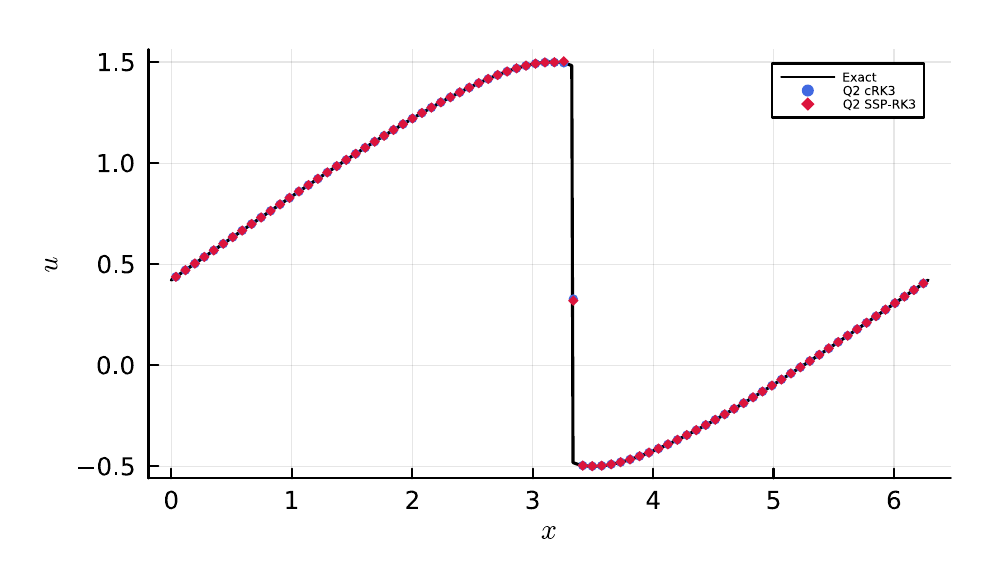}
  \caption{TVB ($M=50$).}
  \label{fig:burgers2d_tvb_slice_y}
\end{subfigure}
\caption{2D Burgers equation with shock, 
  $\mathbb{Q}^2$ DG, $N_x=N_y=80$, $T=0.7$, $y = 0.5$.}
\label{fig:burgers2d}
\end{figure}

\subsection{Euler equations}\label{sec:euler_tests}

We now test the schemes on the 1D Euler equations,
\begin{equation}
    \frac{\partial}{\partial t}
  \begin{pmatrix} \rho \\ m \\ E \end{pmatrix}
  + \frac{\partial}{\partial x}
  \begin{pmatrix} m \\ m^2/\rho + p \\ (E+p)\,m/\rho \end{pmatrix}
  = 0,
\end{equation}
closed by the ideal-gas equation of state
$p = (\gamma-1)\bigl(E - m^2/(2\rho)\bigr)$.
Unless stated otherwise, $\gamma = 1.4$.

\subsubsection{Accuracy test}\label{sec:B1}
We consider $\gamma = 3$ with the isentropic initial data $\rho = 1 + \sin(x)/2$, $u = {1}/{2}+\sin(x)$, and $p = (1 + \sin(x)/2)^3$ on the periodic domain $[0,2\pi]$. 
The system reduces to two
decoupled Burgers equations for the variables
$r_1 = u + \sqrt{3}\,\rho$ and $r_2 = u - \sqrt{3}\,\rho$ before shock formation.
An exact solution is therefore available via the method of characteristics.
The final time is $T=0.1$.
Table~\ref{tab:euler_acc_combined_L2}
reports the $\Ltwo$ errors in energy ($E$)
and the corresponding convergence orders for all schemes under both limiter settings.
\begin{table}[h!]
\centering\small
\caption{Accuracy tests for 1D Euler equations.}\label{tab:euler_acc_combined_L2}
\begin{tabular}{r rr rr rr rr}
\toprule
  $N$ & \multicolumn{2}{c}{$\mathbb{P}^2$ cRK3} & \multicolumn{2}{c}{$\mathbb{P}^3$ cRK4} & \multicolumn{2}{c}{$\mathbb{P}^2$ SSP-RK3} & \multicolumn{2}{c}{$\mathbb{P}^3$ SSP-RK4} \\
\cmidrule(lr){2-9}
  & $L^2$-Error & Order & $L^2$-Error & Order & $L^2$-Error & Order & $L^2$-Error & Order \\
\midrule
  \multicolumn{9}{c}{TVD ($M=0$)} \\
\midrule
     20 & 1.40e-01 & ---  & 8.99e-02 & ---  & 1.50e-01 & ---  & 9.17e-02 & ---  \\
     40 & 4.04e-02 & 1.80 & 2.26e-02 & 2.00 & 4.33e-02 & 1.79 & 2.29e-02 & 2.00 \\
     80 & 1.20e-02 & 1.75 & 5.49e-03 & 2.04 & 1.32e-02 & 1.72 & 5.47e-03 & 2.07 \\
    160 & 3.32e-03 & 1.86 & 1.43e-03 & 1.94 & 3.59e-03 & 1.88 & 1.40e-03 & 1.96 \\
\midrule
  \multicolumn{9}{c}{TVB ($M=50$)} \\
\midrule
     20 & 1.54e-03 & ---  & 7.48e-05 & ---  & 1.73e-03 & ---  & 8.56e-05 & ---  \\
     40 & 1.90e-04 & 3.02 & 4.88e-06 & 3.94 & 2.14e-04 & 3.01 & 5.66e-06 & 3.92 \\
     80 & 2.33e-05 & 3.03 & 3.06e-07 & 4.00 & 2.61e-05 & 3.04 & 3.49e-07 & 4.02 \\
    160 & 2.87e-06 & 3.03 & 1.90e-08 & 4.01 & 3.20e-06 & 3.03 & 2.19e-08 & 3.99 \\
\bottomrule
\end{tabular}
\end{table}

\subsubsection{Blast wave problem}\label{sec:B3}

The blast wave problem of Woodward and Colella \cite{woodward1984numerical} models the interaction
of strong shocks in a closed tube. The initial data are $(\rho,u,p) = (1, 0, 1000)$ for $x < 0.1$, $(\rho,u,p) = (1, 0, 0.01)$ for $0.1 \le x < 0.9$, and $(\rho,u,p) = (1, 0, 100)$ for $x \ge 0.9$, on the domain $[0,1]$ with reflecting boundary conditions on both ends. The test is run with $N=400$ cells until $T=0.038$.
There is no closed-form solution for this problem.
We use a WENO5 finite-difference solution on $10{,}000$ cells as
the reference.
Figures~\ref{fig:blast_tvd} and~\ref{fig:blast_tvb} show the
density profiles.

\begin{figure}[h!]
\centering
\begin{subfigure}[b]{0.45\textwidth}
  \centering
  \includegraphics[width=\textwidth]{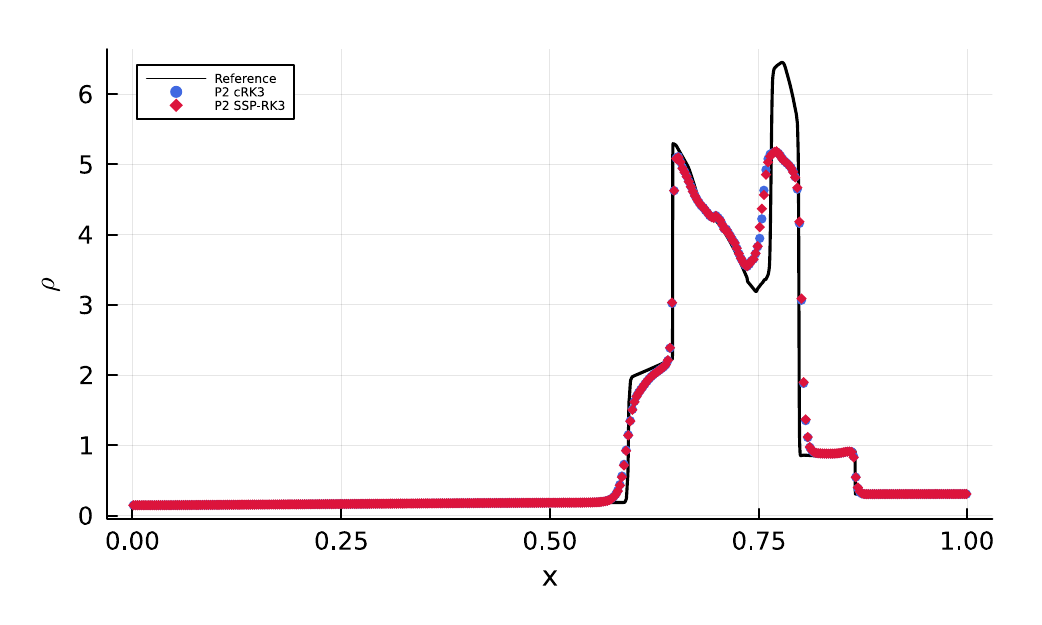}
  \caption{TVD ($M=0$).}
  \label{fig:blast_tvd}
\end{subfigure}
\hfill
\begin{subfigure}[b]{0.45\textwidth}
  \centering
  \includegraphics[width=\textwidth]{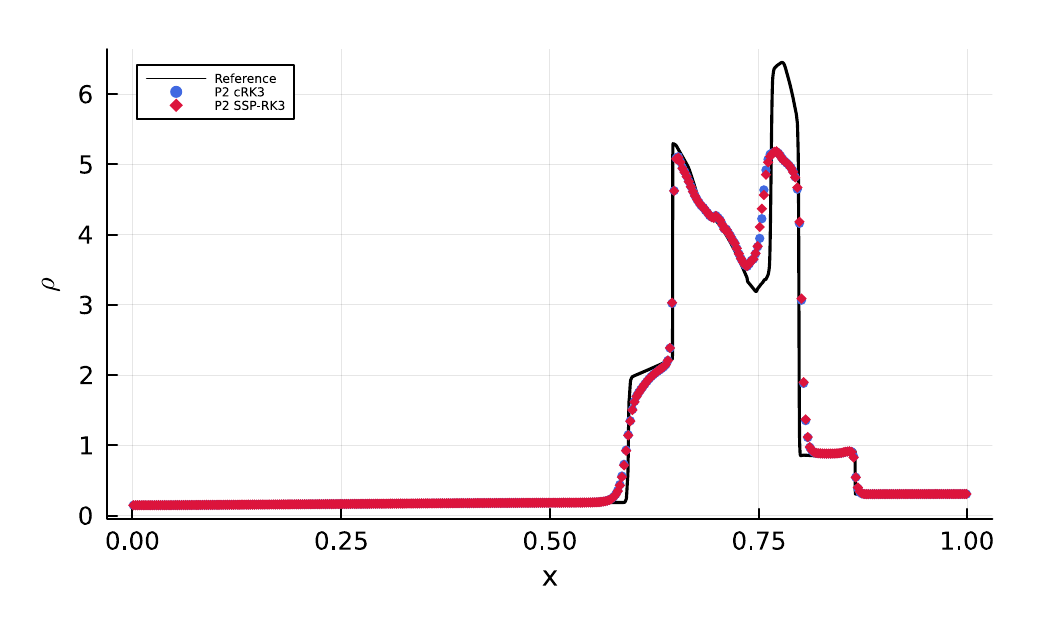}
  \caption{TVB ($M=50$).}
  \label{fig:blast_tvb}
\end{subfigure}
\caption{Woodward--Colella blast wave.
  $\mathbb{P}^2$ DG, $N=400$, $T=0.038$.
  $\rho$ is shown.}
\label{fig:blast}
\end{figure}

\subsubsection{Shu--Osher problem}\label{sec:B4}

The Shu--Osher problem \cite{shu1988efficient} is a standard test used to evaluate if a scheme can
simultaneously handle a strong shock and small-scale smooth structures.
The initial data are $(\rho,u,p) = (3.857143, 2.629369, 10.33333)$ for $x < -4$, and $(\rho,u,p) = (1 + 0.2\sin(5x), 0, 1)$ for $x \ge -4$, on the domain $[-5,5]$ with $N=200$ cells. 
We impose transmissive boundaries and take the final time $T=1.8$.
Since there is no closed-form solution, we use a WENO5 finite-difference solution on $10{,}000$ cells as
the reference.
Figures~\ref{fig:shuosher_tvd} and~\ref{fig:shuosher_tvb} show
the density profiles.

\begin{figure}[h!]
\centering
\begin{subfigure}[b]{0.45\textwidth}
  \centering
  \includegraphics[width=\textwidth]{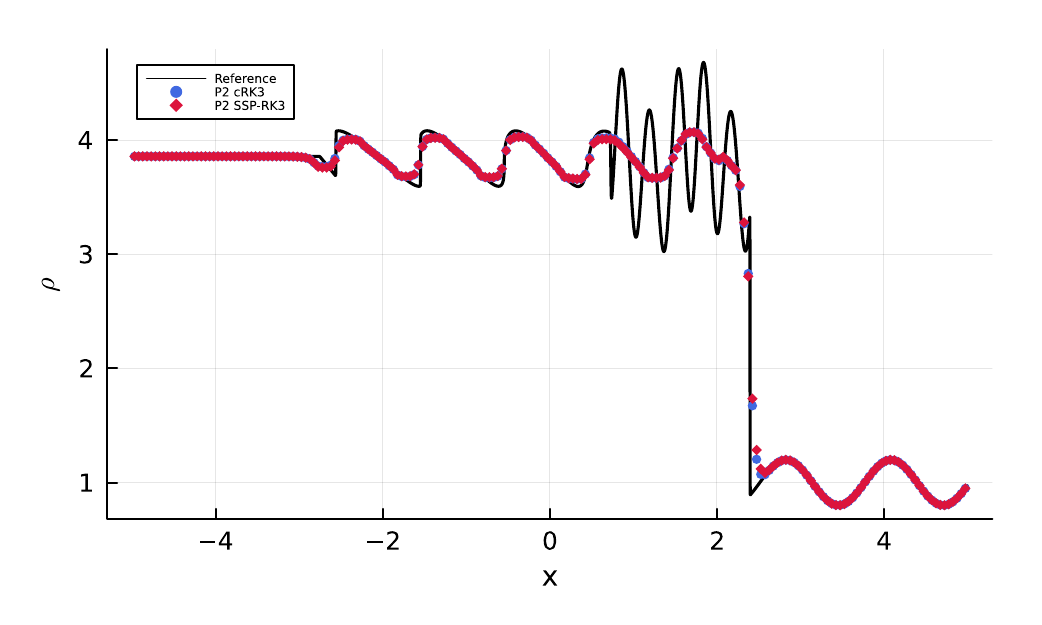}
  \caption{TVD ($M=0$).}
  \label{fig:shuosher_tvd}
\end{subfigure}
\hfill
\begin{subfigure}[b]{0.45\textwidth}
  \centering
  \includegraphics[width=\textwidth]{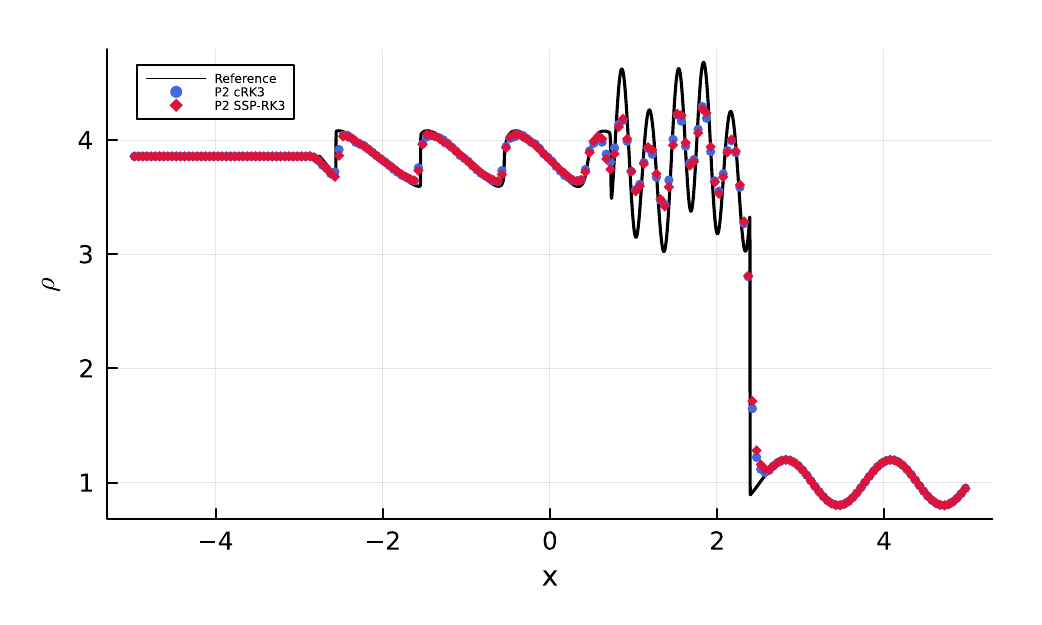}
  \caption{TVB ($M=50$).}
  \label{fig:shuosher_tvb}
\end{subfigure}
\caption{Shu--Osher problem.
  $\mathbb{P}^2$ DG, $N=200$, $T=1.8$.
  $\rho$ is shown.}
\label{fig:shuosher}
\end{figure}

We then test the schemes on the 2D Euler equations
\begin{equation}
    \frac{\partial}{\partial t}
  \begin{pmatrix} \rho \\ \rho u \\ \rho v \\ E \end{pmatrix}
  + \frac{\partial}{\partial x}
  \begin{pmatrix} \rho u \\ \rho u^2 + p \\ \rho u v \\ (E+p)\,u \end{pmatrix}
  + \frac{\partial}{\partial y}
  \begin{pmatrix} \rho v \\ \rho u v \\ \rho v^2 + p \\ (E+p)\,v \end{pmatrix}
  = 0,
\end{equation}
with $p = (\gamma-1)(E - \rho(u^2+v^2)/2)$ and $\gamma = 1.4$.
Only TVB ($M=50$) results are presented to save space.

\subsubsection{Double Mach reflection}\label{sec:D3}

We consider the double Mach reflection problem \cite{woodward1984numerical}.
A Mach~10 oblique shock initially makes a $60^\circ$ angle with the
$x$-axis, meeting the bottom wall at $x=1/6$.
The computational domain is $[0,4]\times[0,1]$.
The post-shock state is
$(\rho,u,v,p) = (8,\,33\sqrt{3}/8,\,-33/8,\,116.5)$
and the pre-shock state is
$(\rho,u,v,p) = (1.4,\,0,\,0,\,1)$.
Along the bottom boundary, the exact post-shock condition is imposed
for $x < 1/6$ and a reflecting wall for $x \ge 1/6$.
Along the top boundary, the exact motion of the oblique shock is
prescribed. 
Inflow and outflow conditions are imposed on the left
and right boundaries, respectively.
The final time is $T=0.2$.
We run the schemes with TVB limiting on two meshes: a coarse mesh of $480\times120$ cells and a fine mesh of
$1920\times480$ cells.
Figure~\ref{fig:dmr} shows that the two schemes produce very close results at both resolutions. 
\begin{figure}[h!]
\centering
\begin{subfigure}[b]{0.45\textwidth}
  \centering
  \includegraphics[width=\textwidth]{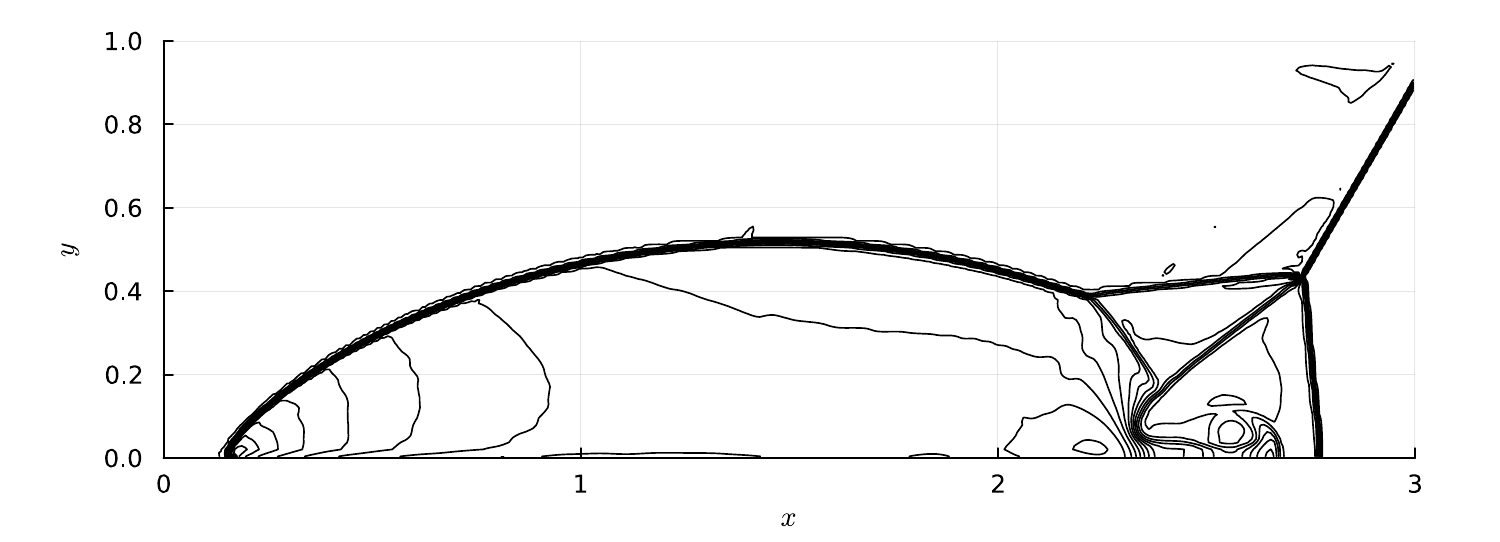}
  \caption{$\mathbb{Q}^2$ cRK3, $480\times120$.}
\end{subfigure}
\hfill
\begin{subfigure}[b]{0.45\textwidth}
  \centering
  \includegraphics[width=\textwidth]{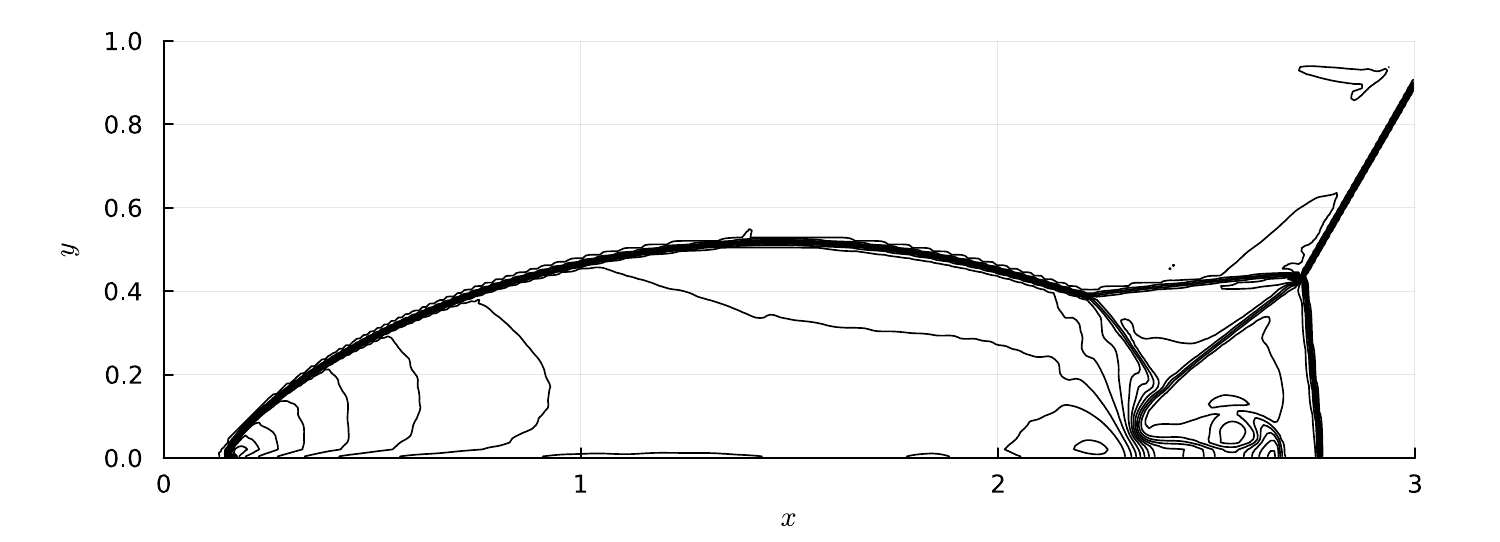}
  \caption{$\mathbb{Q}^2$ SSP-RK3, $480\times120$.}
\end{subfigure}

\medskip

\begin{subfigure}[b]{0.45\textwidth}
  \centering
  \includegraphics[width=\textwidth]{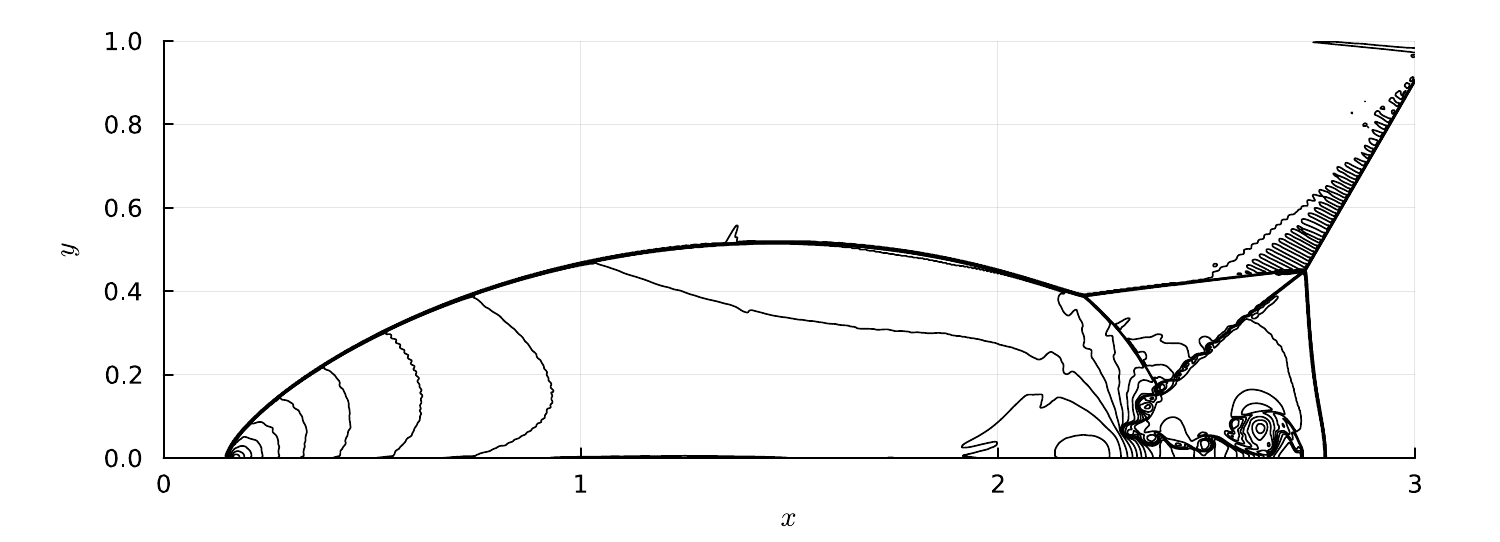}
  \caption{$\mathbb{Q}^2$ cRK3, $1920\times480$.}
\end{subfigure}
\hfill
\begin{subfigure}[b]{0.45\textwidth}
  \centering
  \includegraphics[width=\textwidth]{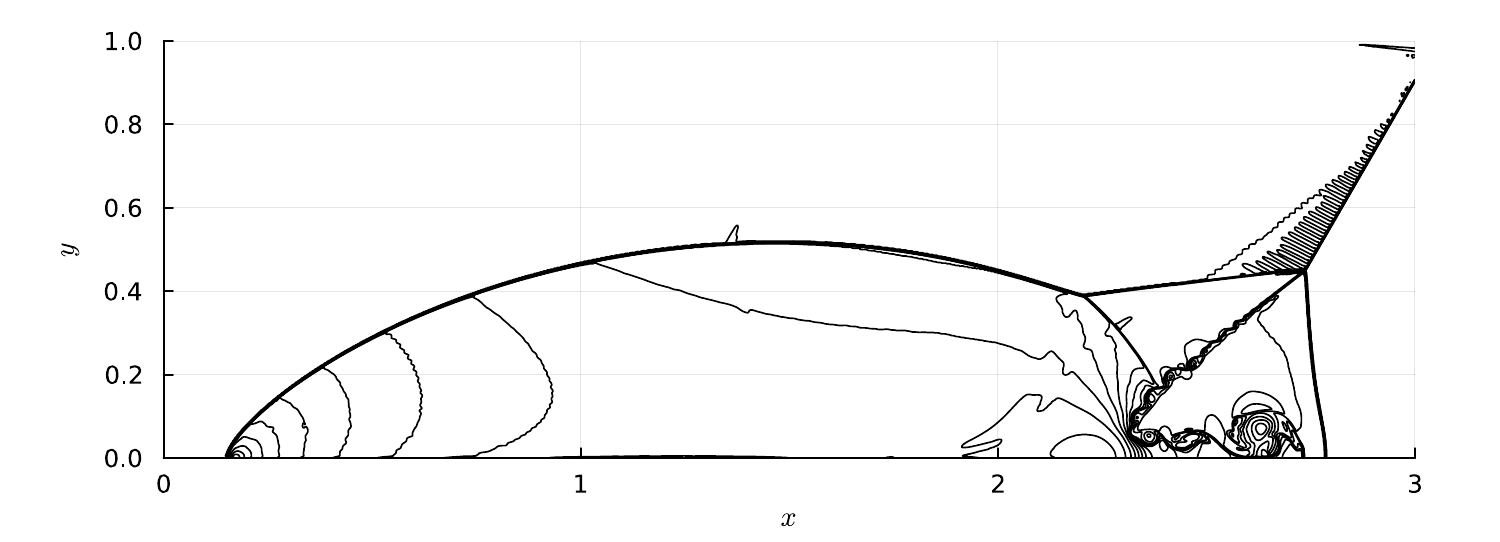}
  \caption{$\mathbb{Q}^2$ SSP-RK3, $1920\times480$.}
\end{subfigure}
\caption{Double Mach reflection, TVB ($M=50$), $T=0.2$.
  30 equally spaced density contours from
$1.3965$ to $22.682$ on the subdomain $[0,3]\times[0,1]$.}
\label{fig:dmr}
\end{figure}

\subsubsection{Forward-facing step}\label{sec:D4}

We consider the forward-facing step problem~\cite{woodward1984numerical}.
A uniform Mach~3 flow enters an L-shaped tunnel with a step
of height $0.2$ located at $x=0.6$.
The computational domain is $[0,3]\times[0,1]$ with the rectangular
region $[0.6,3]\times[0,0.2]$ removed.
The initial condition is $(\rho,u,v,p) = (1.4,\,3,\,0,\,1)$.
Inflow conditions are imposed on the left boundary, outflow on the
right, and reflecting walls on all remaining boundaries.
The final time is $T=4$.
We run the schemes with TVB limiting on two meshes: a coarse mesh of $240\times80$ cells and a fine mesh of
$960\times320$ cells.
Again, the cRK3 and SSP-RK3 results are nearly identical; see Figure~\ref{fig:ffs}.
\begin{figure}[h!]
\centering
\begin{subfigure}[b]{0.45\textwidth}
  \centering
  \includegraphics[width=\textwidth]{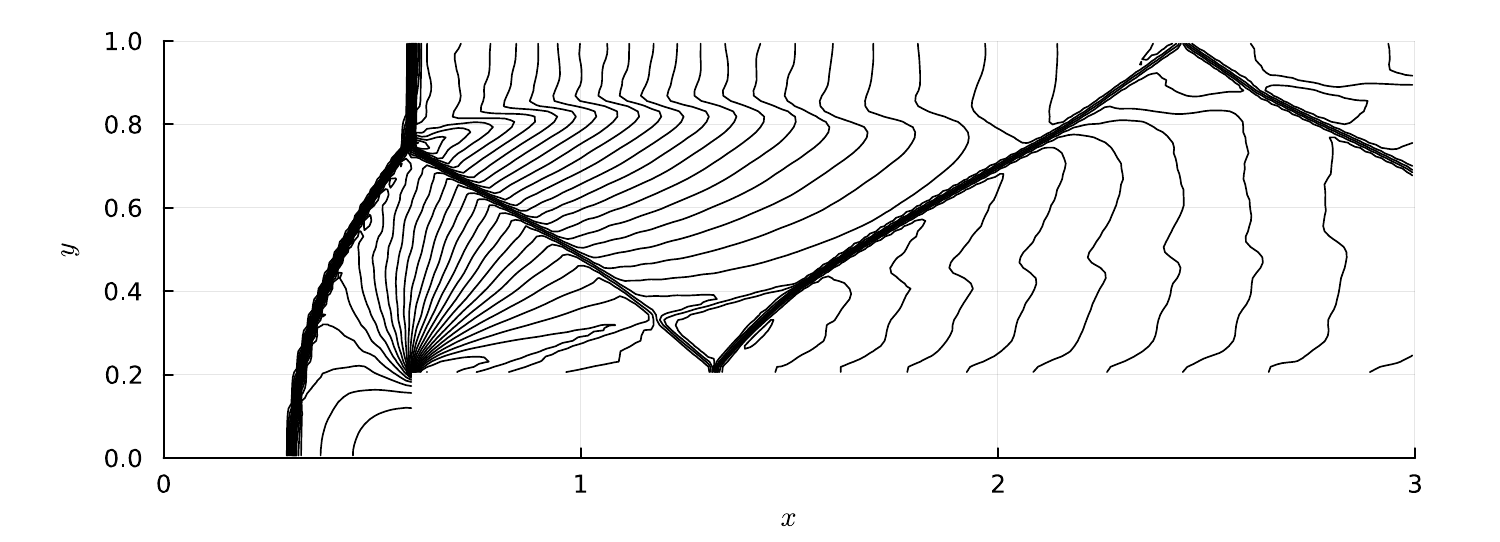}
  \caption{$\mathbb{Q}^2$ cRK3, $240\times80$.}
\end{subfigure}
\hfill
\begin{subfigure}[b]{0.45\textwidth}
  \centering
  \includegraphics[width=\textwidth]{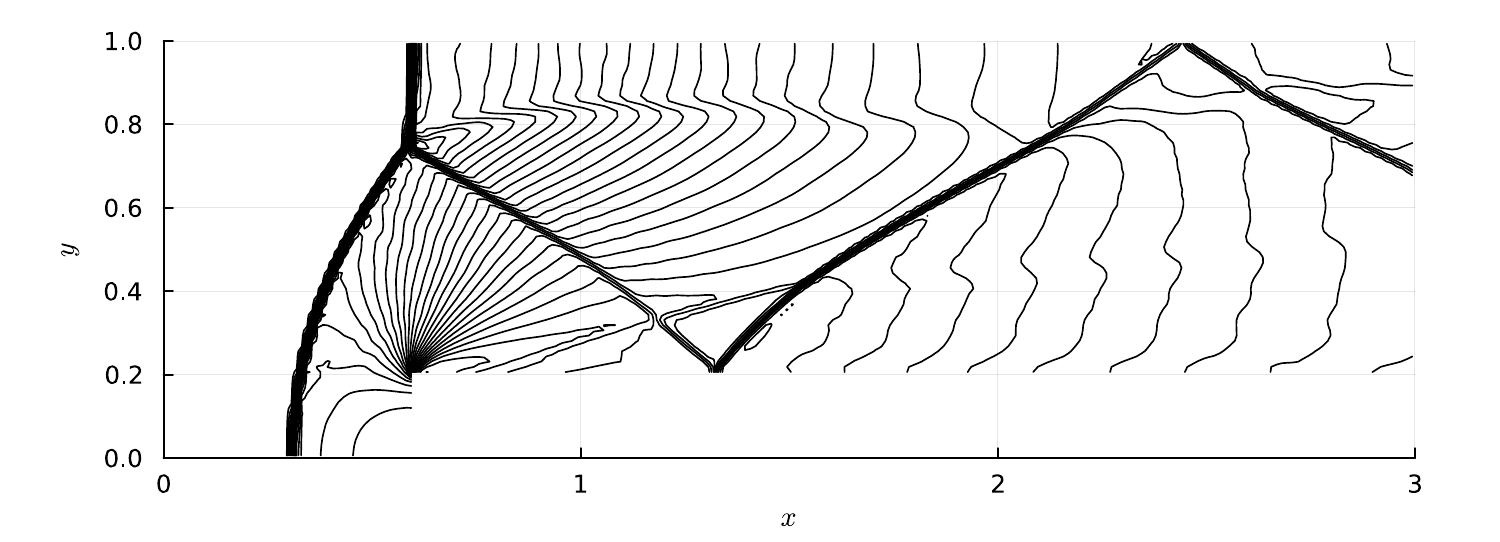}
  \caption{$\mathbb{Q}^2$ SSP-RK3, $240\times80$.}
\end{subfigure}

\medskip

\begin{subfigure}[b]{0.45\textwidth}
  \centering
  \includegraphics[width=\textwidth]{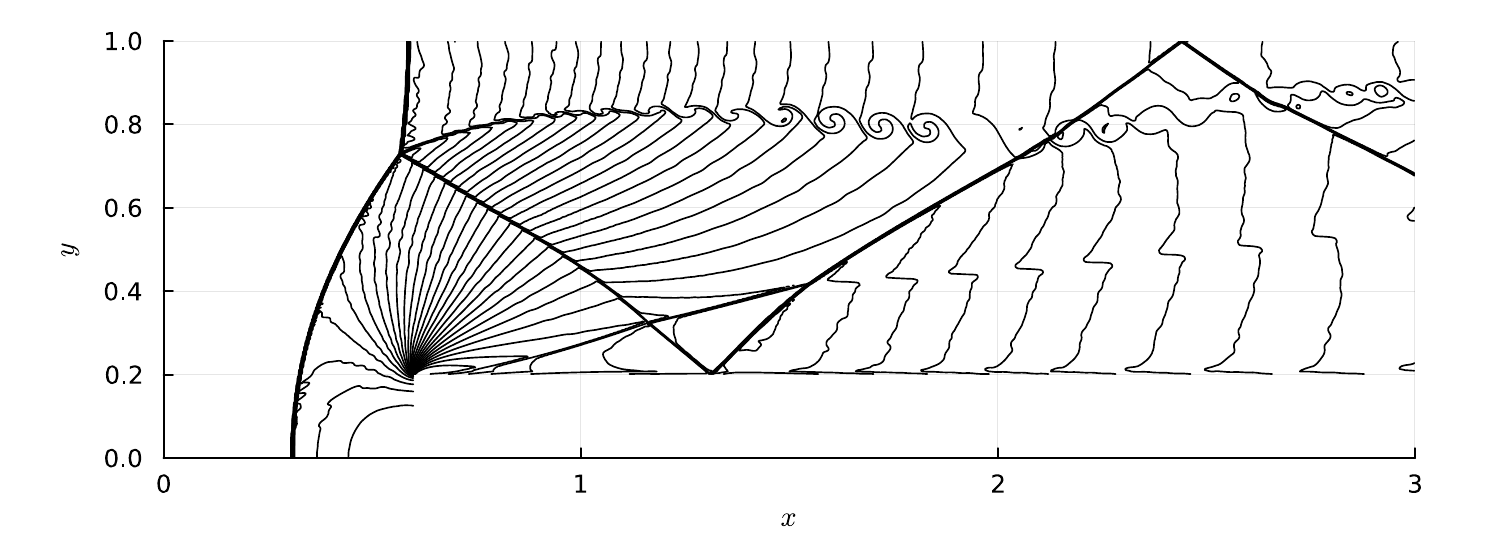}
  \caption{$\mathbb{Q}^2$ cRK3, $960\times320$.}
\end{subfigure}
\hfill
\begin{subfigure}[b]{0.45\textwidth}
  \centering
  \includegraphics[width=\textwidth]{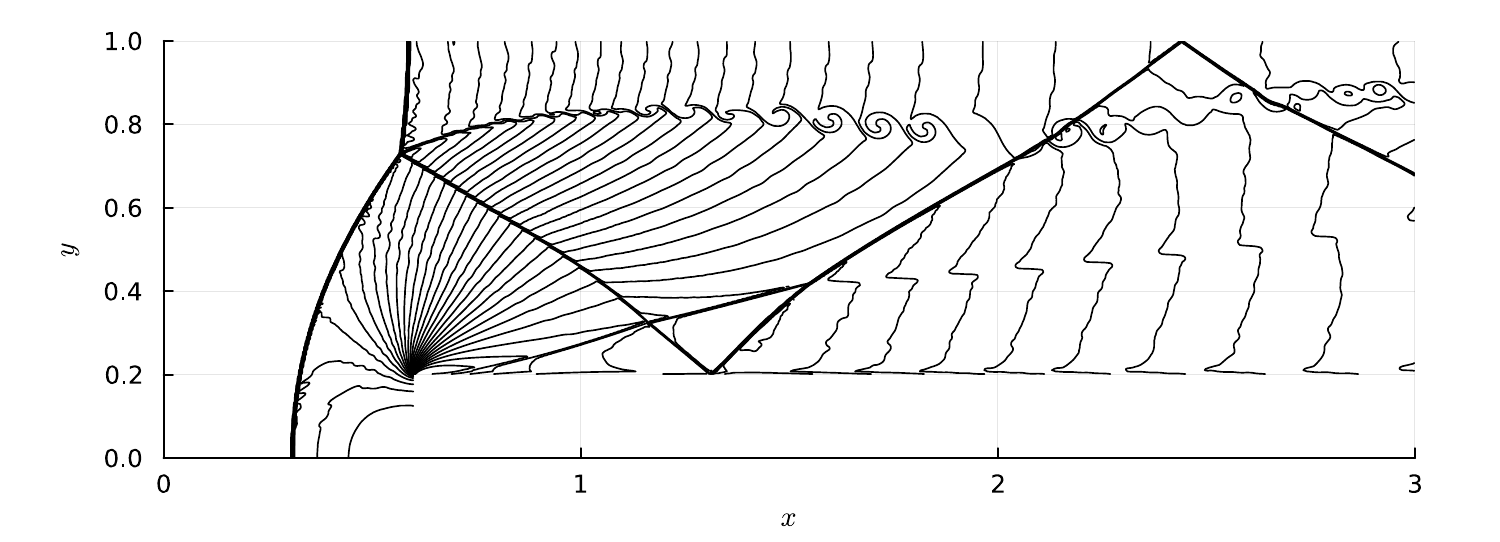}
  \caption{$\mathbb{Q}^2$ SSP-RK3, $960\times320$.}
\end{subfigure}
\caption{Forward-facing step, TVB ($M=50$), $T=4$. 30 equally spaced density contours from
$0.090338$ to $6.2365$.}
\label{fig:ffs}
\end{figure}

\section{Conclusions}\label{sec:conclusions}
{This paper presents a novel framework for preserving the TVD and TVB properties of the DG methods using non-SSP time discretizations. The new approach uses a trace-limiting strategy and limits only the final corrector stage, regardless of how intermediate predictor stages are computed. Based on Harten's lemma, it can be shown that the trace limiting alone can preserve the TVDM/TVBM property of the method. For further robustness, a polynomial limiter is applied at the end of the final stage to produce non-oscillatory DG polynomials. The conservation, TVDM/TVBM, and accuracy-preserving properties are proved. The proposed method decouples the TVD/TVB limiting from the SSP restriction, streamlines the algorithmic flow for concurrent computing, retains the compactness of the cRKDG framework, and accommodates fully discrete schemes beyond the RK family. The algorithmic extension to multidimensional systems is also provided. Numerical experiments with the TVD/TVB cRKDG schemes demonstrate that this new approach achieves performance comparable to classical SSP-RKDG methods. Our future work includes detailed studies on extending the framework to ADER-DG, Lax--Wendroff DG, and implicit RKDG methods, as well as its compatibility with other structure-preserving techniques.}

\section*{Acknowledgment} The authors thank Professor Chi-Wang Shu of Brown University for his helpful comments and suggestions, which greatly improved the paper. The authors report the usage of AI tools and assume responsibility for all content.

\bibliographystyle{abbrv}
\bibliography{refs}

\end{document}

%% file: ex_shared.tex
\usepackage{lipsum}
\usepackage{amsfonts}
\usepackage{graphicx}
\usepackage{epstopdf}
\usepackage{algorithmic}
\ifpdf
  \DeclareGraphicsExtensions{.eps,.pdf,.png,.jpg}
\else
  \DeclareGraphicsExtensions{.eps}
\fi

\newsiamremark{remark}{Remark}
\newsiamremark{hypothesis}{Hypothesis}
\crefname{hypothesis}{Hypothesis}{Hypotheses}
\newsiamthm{claim}{Claim}

\headers{TVD Preservation without TVD Time Discretization}{}

\title{TVD and TVB Preservation in RKDG and cRKDG Methods without TVD Time Discretization}

\author{Ziyao Xu\thanks{Department of Mathematics and Statistics, Binghamton University, Binghamton, NY 13902, USA. (\email{zxu24@binghamton.edu}) }
  \and Zheng Sun\thanks{Department of Mathematics, The University of Alabama,
		Tuscaloosa, AL 35487, USA. (\email{zsun30@ua.edu}) The work of this author was partially supported by the NSF grant DMS-2208391.}}

\usepackage{amsopn}
